\theoremstyle{plain}
\newtheorem{thm}{Theorem}
\newtheorem{cor}[thm]{Corollary}
\newtheorem{lem}[thm]{Lemma}
\newtheorem{fact}[thm]{Fact}
\newtheorem{obs}[thm]{Observation}
\newtheorem{ex}[thm]{Example}
\newcommand{\lab}[1]{\label{#1}}%\marginpar{\footnotesize #1}}
\newcommand{\mc}[1]{\mathcal{#1}} 
\newcommand{\CO}[1]{}
\begin{document}

%%%%%%%%%%%%%%%%%%%%%%%%%%%%%%%%%%%%%%%%%%%%%%%%%%%%%%%%%%%%%%%%%%%%%%%%%%%%%% 
\title[Perspectivity]{Perspectivity in
complemented modular lattices and regular rings}

\author[C. Herrmann]{Christian Herrmann}
\address{Technische Universit\"{a}t Darmstadt FB4\\Schlo{\ss}gartenstr. 7, 64289 Darmstadt, Germany}
\email{herrmann@mathematik.tu-darmstadt.de}

\begin{abstract} 
Based on an analogue for systems of partial isomorphisms
between lower sections in a complemented modular lattice
we construct a series of terms (including inner inverse
as basic operation and providing descending chains)
 such that
 principal right ideals $aR \cong bR$ in a (von Neumann) regular 
ring $R$ are perspective if
the series becomes stationary. 
 In particular, this applies if
 $aR \cap bR$ is  of finite height in $L(R)$.
This is used to derive,  for existence-varieties
$\mathcal{V}$
of regular rings, equivalence of unit-regularity
and direct finiteness, both conceived as a  property shared by all members
of $\mathcal{V}$. 
\end{abstract}

\subjclass{06C20,  16E50}
\keywords{Complemented modular lattice, von Neumann regular ring, perspectivity,unit-regularity, existence-varieties}

\maketitle

\section{Introduction}
(Von Neumann) regular rings $R$
and complemented modular lattices are closely connected fields
since  the work of von Neumann cf. \cite{neu}
- with $R$ one associates its lattice $L(R)$ of principal right ideals.
\emph{Unit-quasi-inverses} $u$ of elements $a$ (i.e. unites $u$ such that  $aua=a$)
have been introduced by Ehrlich~\cite{ehr1,ehr2},
a ring being \emph{unit-regular} if each element admits
some unit-quasi-inverse (such rings are, in particular \emph{directly finite}:
$ab=1$ implies $ba=1$).
 Ehrlich also showed that a regular
ring $R$ is unit-regular if and only if for all idempotents $e,f$
one has $eR\cong fR$ implying $(1-e)R\cong (1-f)R$.
Handelman~\cite{hand2} added further equivalent conditions, one of them
being that 
$eR \cong fR$ implies $eR$ perspective to $fR$ in $L(R)$.
Perspectivity, regularity, and unit-regularity of elements in 
general rings have been intensively studied, see e.g. \cite{garg,lee,mary,niel}.

 The purpose of the present note
is to give a sufficient condition 
on  $aR\cong bR$ 
in a regular ring $R$  
granting that  $aR$ is  perspective to $bR$
(which holds if   $a$ has a  unit-quasi-inverse)
 and to show that this applies if $aR\cap bR$ is of finite height in $L(R)$.

Here,  establishing perspectivity relies
on calculations in $L(R)$, for convenience  done
in abstract complemented modular lattices
endowed with a system of isomorphisms between lower sections
requiring properties  present in the case of  isomorphisms
induced by  isomorphisms 
between principal right ideals. 
The principal result is a reduction process 
associating $e_{n+1} \leq e_n$ and $f_{n+1}\leq f_{n}$ with given $e_n,f_n$
such that 
$e_n$ is perspective to $f_n$
(and so $e_0$ perspective to $f_0$) if  $e_{n+1}\cap f_{n+1}=e_n \cap f_n$ or
if  $e_{n+1}$ is perspective to $f_{n+1}$.  
In $\aleph_0$-complete complemented  modular lattices
$e_0$ is perspective to $f_0$  if  the meet
of the $e_n$ is perspective to the meet of the $f_n$. 

If one considers regular rings endowed with an operation
of inner inversion,  termination of this reduction process 
after $n$ steps can be
captured by an identity. This is applied to study unit-regularity
in the context of \emph{existence varieties} $\mathcal{V}$ of regular rings, that is,  classes closed under
homomorphic images, direct products, and regular subrings.
It is shown that for such classes unit-regularity 
is equivalent to direct finiteness, both considered as a property
required for all members.
(Compare this to the result of  Baccella and Spinosa
\cite{bac} that a  semiartinian regular ring is unit-regular
if and only if all its homomorphic images are directly finite.)
Another  property shown equivalent to unit-regularity  is that $\mathcal{V}$
does not contain nonartinian subdirectly irreducibles, equivalently, 
if it is generated by artinians of bounded finite length. 
Having $\mathcal{V}$ generated by artinians is not sufficient
for unit-regularity
in view of the result,  established by Goodearl, Menal, and Moncasi \cite[Thm.~2.5]{good2},
 that free regular rings are residually artinian
(and, according to Herrmann and Semenova~\cite[Cor.~14]{hs}, even residually finite). These results further developed  work of Tyukavkin
\cite{tyu} 
 obtaining the ring of row and column finite matrices as
well as certain regular rings $R$ of endomorphisms
of vector spaces as 
homomorphic images of (regular) subrings of products of finite-dimensional
matrix rings over (skew-)fields. In particular,
this approach was basic for
the study of existence varieties of regular rings 
in Herrmann and Semenova \cite{hs} and of varieties of $*$-regular rings
which
are generated by their artinian members,  see Micol~\cite{flo}, Herrmann and Semenova~\cite{rep}, and \cite{arch2}.

Thanks are due to the referees for valuable hints  to related literature and
results as well as to necessary clarifications and
a possible extension. 

\section{Complemented modular lattices}
\subsection{Preliminaries}
We refer to Birkhoff~\cite{birk} and von Neumann~\cite{neu}.
A \emph{lattice} $L$ is a  set endowed with a partial order $\leq$
such that 
 any two elements $a,b$ have infimum and supremum
written as \emph{meet} $a\cap b$ and \emph{join} $a+ b$, respectively.
We also write $ab=a\cap b$ and apply the usual preference rules.
All lattices to be considered
will have smallest element $0$ and greatest element $1$.

For $u \leq v$ in $L$, the \emph{interval} $[u,v]=\{x\mid u \leq x \leq v\}$
is again a lattice with the inherited partial order and operations.
A lattice $L$ is \emph{modular}   if 
\[ b \leq a \Rightarrow a (b + c)= b +ac.\]
Then the maps $x \mapsto x+b$ and $y \mapsto ay$
are mutually inverse isomorphisms between $[ab,a]$ and $[b,b+a]$.
An element $u$ of a modular lattice  $L$ is \emph{neutral} if 
$(u+x)(u+y)=u+xy$ for all $x,y \in L$; the set of
neutral elements is a sublattice of $L$.  
An element $a$ of a modular lattice  $L$ is of \emph{height} $d$ if
some (equivalently: each) maximal chain in $[0,a]$ has 
\emph{length} $d$, that is $d+1$-elements.
For the following see \cite[Ch. III Thm. 15]{birk}.

\begin{fact} \lab{f1}
In a modular lattice,  the direct product $[ab, a]\times [ab,b]$
embeds into $[ab,a+b]$ via $(x,y) \mapsto x+y= (x+b)(y+a)$.
\end{fact}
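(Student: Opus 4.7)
The plan is to check that $\phi(x,y):=x+y$ defines a lattice embedding $[ab,a]\times[ab,b]\hookrightarrow[ab,a+b]$, relying throughout on the modular law. I would proceed in four short steps, exploiting the dual formulas $\phi(x,y)=x+y=(x+b)(y+a)$: the sum form makes joins obvious, the product form makes meets obvious.

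First, I would verify the equality $x+y=(x+b)(y+a)$. Since $x\le a\le y+a$, modularity gives $(x+b)(y+a)=x+b(y+a)$; since $y\le b$ and $ab\le y$, a second application yields $b(y+a)=y+ab=y$, whence $(x+b)(y+a)=x+y$. Clearly $ab\le x+y\le a+b$, so $\phi$ does land in $[ab,a+b]$.

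Second, I would compute the two coordinate projections. Since $x\le a$, modularity gives $a(x+y)=x+ay$; but $y\le b$ forces $ay\le ab\le x$, so $a(x+y)=x$. Symmetrically $b(x+y)=y$. Hence $z\mapsto(za,zb)$ is a left inverse of $\phi$, so $\phi$ is injective, and it is order preserving by inspection. Join preservation is then immediate: $\phi(x_1+x_2,y_1+y_2)=x_1+x_2+y_1+y_2=\phi(x_1,y_1)+\phi(x_2,y_2)$.

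For meet preservation I would switch to the product form. The standard transposed-intervals isomorphisms $x\mapsto x+b$ from $[ab,a]$ to $[b,a+b]$ and $y\mapsto y+a$ from $[ab,b]$ to $[a,a+b]$ (with inverses $z\mapsto za$ and $z\mapsto zb$, respectively) are lattice isomorphisms by modularity, so in particular they preserve meets: $(x_1+b)(x_2+b)=x_1x_2+b$ and $(y_1+a)(y_2+a)=y_1y_2+a$. Therefore
\[
\phi(x_1,y_1)\cap\phi(x_2,y_2)=(x_1+b)(x_2+b)(y_1+a)(y_2+a)=(x_1x_2+b)(y_1y_2+a)=\phi(x_1x_2,y_1y_2).
\]

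There is no genuine obstacle: the whole argument is bookkeeping with the modular law. The only observation worth flagging is that meet preservation becomes transparent only after one rewrites $\phi(x,y)$ in the product form $(x+b)(y+a)$ and invokes the isomorphisms between transposed intervals; attacking $(x_1+y_1)(x_2+y_2)$ directly is awkward.
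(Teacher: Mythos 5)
Your proof is correct. Note that the paper itself offers no proof of this statement: it is stated as a Fact with a pointer to Birkhoff \cite[Ch.~III Thm.~15]{birk}, so there is nothing internal to compare against; your argument is the standard one that the citation stands in for. All the individual steps check out: the identity $x+y=(x+b)(y+a)$ via two applications of the modular law, the coordinate retractions $z\mapsto(za,zb)$ giving injectivity, join preservation from the sum form, and meet preservation from the product form together with the transposed-interval isomorphisms $[ab,a]\cong[b,a+b]$ and $[ab,b]\cong[a,a+b]$ (which the paper records explicitly just before the Fact, so you are entitled to use them). Your closing remark is also the right methodological point -- the two dual expressions for $\phi(x,y)$ are exactly what make joins and meets each easy in turn, and this is why the statement records both forms.
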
 

 If $ab=0$ then we write $a + b=a\oplus b$.
If $a\oplus b=1$ then $b$ is a \emph{complement} of $a$.
A lattice $L$ is \emph{complemented} if each element admits some complement.
If $L$ is, in addition, modular then we speak of a CML.
In a CML each interval $[u,v]$   is again a CML
(within $[u,v]$, a complement of $x$ is given by $yv +u=(y+u)v$
where $x\oplus y=1$ in $L$).

In a CML,  elements $a,b$    are \emph{perspective}, written as $a\sim b$, 
 if they have a common complement;
equivalently, $a\sim_c b$ for some $c$, the latter meaning that
 $a+b=a+c=b+c$ and $ab=ac=bc$.
 We write $a \approx_c$ if $a\sim_c b$ and $ab=0$;
also $a \approx b$ if $a\approx_c b$ for some $c$.
 Applying  Fact~\ref{f1}
one obtains the following. 
\begin{fact}\lab{f2}
In a modular lattice, if $a_i \sim_{c_i} b_i, i=1,2$ and $a_1b_1a_2b_2\geq (a_1+b_1)(a_2+b_2)$  then $a_1+b_1 \sim_{c_1+c_2} a_2+b_2$.
\end{fact}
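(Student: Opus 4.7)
My plan is to apply Fact~\ref{f1} with $a = a_1+b_1$ and $b = a_2+b_2$, thereby embedding a product of two intervals into a convenient interval of $L$, and then to combine the two given perspectivities coordinatewise inside this product.

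First, the inequality $a_1b_1a_2b_2 \leq (a_1+b_1)(a_2+b_2)$ always holds, since $a_1b_1a_2b_2 \leq a_i \leq a_i+b_i$ for each $i$; so the hypothesis collapses to an equality. Set $u := (a_1+b_1)(a_2+b_2) = a_1b_1a_2b_2$. The perspectivity data sit above $u$: indeed, $u \leq a_ib_i = a_ic_i$ forces $u \leq a_i, b_i, c_i$ for $i = 1, 2$, so that $a_i, b_i, c_i \in [u, a_i+b_i]$ and each perspectivity $a_i \sim_{c_i} b_i$ is realised in the interval above $u$.

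By Fact~\ref{f1}, the map $\varphi\colon [u, a_1+b_1] \times [u, a_2+b_2] \to [u, a_1+b_1+a_2+b_2]$ sending $(x,y)$ to $x+y$ is a lattice embedding and hence preserves perspectivities. In the product lattice, the two given perspectivities live in independent factors as $(a_1, u) \sim_{(c_1, u)} (b_1, u)$ and $(u, a_2) \sim_{(u, c_2)} (u, b_2)$. Since meets and joins in a product are computed coordinatewise, the joined axis $(c_1, c_2)$ realises a perspectivity between the correspondingly joined elements of the product. Applying $\varphi$ transports this to a perspectivity in $L$ with axis $c_1+c_2$, yielding the conclusion of the fact.

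The main subtlety is the internal step inside the product: verifying that two perspectivities in independent factors combine coordinatewise, with their axes joined, to a single perspectivity of the joined data in the product. This reduces to checking the three equalities of joins and three of meets required by the definition of $\sim_{c_1+c_2}$, carried out factor by factor using the hypotheses $a_i \sim_{c_i} b_i$ and the structure of the product. Together with Fact~\ref{f1}, this completes the argument; no tools beyond the modular law are needed.
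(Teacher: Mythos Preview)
Your approach is exactly what the paper indicates: its entire proof is the sentence ``Applying Fact~\ref{f1} one obtains the following,'' and you have spelled out precisely that application, embedding $[u,a_1+b_1]\times[u,a_2+b_2]$ into $[u,a_1+a_2+b_1+b_2]$ and transporting the coordinatewise perspectivity $(a_1,a_2)\sim_{(c_1,c_2)}(b_1,b_2)$ through the embedding. One remark: what your argument actually yields is $a_1+a_2 \sim_{c_1+c_2} b_1+b_2$, not the printed $a_1+b_1 \sim_{c_1+c_2} a_2+b_2$; the latter is a typo in the statement, as every subsequent application of the fact (Lemma~\ref{l1}, Fact~\ref{f5a}) confirms.
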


\begin{fact}\lab{f3a}
In a modular lattice one has $a\sim b$ if and only if $x \sim y$ 
for some (equivalently: all) $x,y$ such that $a=x \oplus ab$ and
$b=y \oplus ab$, Moreover, 
one has $a\oplus y =a+b=b\oplus x$ for such $x,y$.
\end{fact}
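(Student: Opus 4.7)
The plan is first to derive the basic consequences of the decompositions $a = x \oplus ab$ and $b = y \oplus ab$, then handle the two directions of the biconditional---the forward one via Fact~\ref{f2} and the reverse one by constructing a common complement of $x$ and $y$ explicitly from one of $a$ and $b$.

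As preliminaries, since $y \leq b$ we have $a \cap y \leq ab$, and combined with $y \cap ab = 0$ this forces $a \cap y = 0$; symmetrically $b \cap x = 0$ and in particular $xy \leq x \cap b = 0$. Modularity (using $x \leq a$) gives $(x+y) \cap a = x + (y \cap a) = x$, whence $(x+y) \cap ab = x \cap ab = 0$. Moreover $a + y = (x + ab) + y = x + b$, and this element contains both $a$ and $b$, hence equals $a + b$. These computations already settle the \emph{moreover} clause $a \oplus y = a + b = b \oplus x$.

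For the forward direction, assuming $x \sim_{c_1} y$, I apply Fact~\ref{f2} with $a_1 = x$, $b_1 = y$ and $a_2 = b_2 = ab$ (trivially self-perspective via $c_2 = ab$). The transversality hypothesis reduces to $(x+y) \cap ab \leq xy$, which holds since both sides vanish, and the resulting conclusion is exactly $a = x + ab \sim b = y + ab$.

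For the reverse direction, suppose $a \sim_c b$. The key observation is $ab = ac \leq c$. From this, $xc \leq ac = ab$ combined with $xc \leq x$ forces $xc = 0$, and symmetrically $yc = 0$; also $b \leq a + c = x + ab + c = x + c$ (using $ab \leq c$), hence $x + c = a+b$, and symmetrically $y + c = a+b$. The right witness is $c' := c \cap (x+y)$: modularity ($x \leq x+y$) yields
\[ x + c' = (x + c) \cap (x+y) = (a+b) \cap (x+y) = x+y, \]
symmetrically $y + c' = x+y$, while $xc' \leq xc = 0$ and $yc' = 0$. Together with $xy = 0$, this exhibits $x \sim_{c'} y$. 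The \emph{for some equivalently for all} phrasing is then immediate, since both implications are independent of the particular decomposition chosen. The main delicate point is the reverse direction: naive choices such as $c' = c$ or $c' = c + ab$ yield $x + c' = a + b$ rather than $x + y$; only $c' = c \cap (x+y)$ works, and precisely because the inequality $ab \leq c$ lets the modular identity collapse all the way down to $x+y$.
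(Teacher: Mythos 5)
Your proof is correct. It rests on the same preliminary computations as the paper's ($ay=bx=xy=0$, $(x+y)\cap ab=0$, $a\oplus y=x+b=a+b$), but you organize the transfer of perspectivity differently. The paper observes that $ab\oplus(x+y)=a+b$, so that $z\mapsto z+ab$ is a lattice isomorphism of $[0,x+y]$ onto $[ab,a+b]$ carrying $x,y$ to $a,b$, and lets the equivalence $x\sim y\Leftrightarrow a\sim b$ follow in one stroke from the invariance of the relation $\sim_c$ (which lives entirely inside these intervals) under that isomorphism. You instead prove the two implications separately: upward by Fact~\ref{f2} applied to $x\sim_{c_1}y$ together with $ab\sim_{ab}ab$, and downward by exhibiting the explicit common complement $c\cap(x+y)$ of $x$ and $y$. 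Your version is longer but more self-contained, and the downward half makes concrete a witness the paper leaves implicit (it is in effect the pullback of a complement along $z\mapsto z+ab$); your remark that $c$ itself or $c+ab$ would not work is accurate. One caveat on the upward half: as printed, the conclusion of Fact~\ref{f2} reads $a_1+b_1\sim_{c_1+c_2}a_2+b_2$, which under your instantiation would give the useless $x+y\sim ab$; you have silently (and correctly) read it as $a_1+a_2\sim_{c_1+c_2}b_1+b_2$, which is how the paper itself uses it in Lemma~\ref{l1}, so this is a typo in the paper rather than a gap in your argument.
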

\begin{proof}
Observe that for such $x,y$ one has $ay = aby=0$
whence  by modularity 
$a(x+y) = x$ and, similarly, $bx=0$ and $b(x+y)=y$.   
By modularity  it follows that $ab(x+y)=0$. Now one has $ab+x+y= a+b$
so that   the map $z \mapsto z+ab$ is an isomorphism of
$[0,x+y]$ onto $[ab,a+b]$. Moreover, $a+y=a+ab+y=a+b$. Thus $a\oplus y=a+b$ and, similarly, $b\oplus x=a+b$.
\end{proof}

\begin{figure} 
\setlength{\unitlength}{8mm}
\begin{picture}(1,8)

\put(0.5,-0.2){$0$}

\put(-4.5,2){$y$}

\put(4.2,2){$v$}

\put(-.7,1.8){$x$}

\put(.5,1.8){$u$}

\put(-7.5,3.8){$z=x\oplus y$}

\put(5.2,3.8){$w=u\oplus v$}

\put(.3,0.8){$xu=zw$}
\put(-0.4,3.6){$y+v$}
%\put(0.1,4.5){$c$}

\put(0,0){\circle*{0.2}}

\put(-4,2){\circle*{0.2}}

\put(4,2){\circle*{0.2}}

\put(0,5){\circle*{0.2}}

\put(0,4){\circle*{0.2}}
\put(4,3){\circle*{0.2}}
\put(-4,3){\circle*{0.2}}
\put(0,1){\circle*{0.2}}

\put(-5,4){\circle*{0.2}}

\put(-3,4){\circle*{0.2}}

\put(-4,5){\circle*{0.2}}

\put(5,4){\circle*{0.2}}

\put(3,4){\circle*{0.2}}

\put(4,5){\circle*{0.2}}

\put(-1,2){\circle*{0.2}}

\put(1,2){\circle*{0.2}}

\put(0,3){\circle*{0.2}}

\put(-1,6){\circle*{0.2}}

\put(1,6){\circle*{0.2}}

\put(0,7){\circle*{0.2}}

\put(0,0){\line(2,1){4}}

\put(0,1){\line(2,1){4}}

\put(0,0){\line(-2,1){4}}

\put(0,1){\line(-2,1){4}}

\put(-4,3){\line(2,1){4}}

\put(-4,2){\line(2,1){4}}

\put(4,3){\line(-2,1){4}}

\put(4,2){\line(-2,1){4}}

\put(0,0){\line(0,1){1}}

\put(-4,2){\line(0,1){1}}

\put(4,2){\line(0,1){1}}

\put(0,4){\line(0,1){1}}

\put(0,5){\line(1,1){1}}

\put(0,5){\line(-1,1){1}}

\put(-1,6){\line(1,1){1}}

\put(1,6){\line(-1,1){1}}

\put(-4,3){\line(1,1){1}}

\put(-4,3){\line(-1,1){1}}

\put(-5,4){\line(1,1){1}}

\put(-3,4){\line(-1,1){1}}

\put(4,3){\line(1,1){1}}

\put(4,3){\line(-1,1){1}}

\put(5,4){\line(-1,1){1}}

\put(3,4){\line(1,1){1}}

\put(0,1){\line(1,1){1}}

\put(0,1){\line(-1,1){1}}

\put(-1,2){\line(1,1){1}}

\put(1,2){\line(-1,1){1}}

\put(0,5){\line(1,1){1}}

\put(0,5){\line(-1,1){1}}

\put(-1,6){\line(1,1){1}}

\put(1,6){\line(-1,1){1}}

\put(-1,2){\line(-2,1){4}}

\put(1,2){\line(-2,1){4}}

\put(0,3){\line(-2,1){4}}

\put(-1,2){\line(2,1){4}}

\put(1,2){\line(2,1){4}}

\put(0,3){\line(2,1){4}}

\put(3,4){\line(-2,1){4}}

\put(5,4){\line(-2,1){4}}

\put(4,5){\line(-2,1){4}}

\put(-5,4){\line(2,1){4}}

\put(-3,4){\line(2,1){4}}

\put(-4,5){\line(2,1){4}}

\end{picture}
\caption{Lemma 4}\label{fig1}
\end{figure}

\subsection{Two lemmas on modular lattices}

\begin{lem}\lab{l1}In a 
modular lattice, if $z=x\oplus y$, $w=u\oplus v$, and $zw=xu$
then $yv=0$. If, in addition, 
$x\sim u$ and $y \sim v$ then also $z \sim w$. 
\end{lem}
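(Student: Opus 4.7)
The first assertion is immediate: from $yv \le y \le z$ and $yv \le v \le w$ we get $yv \le zw = xu \le x$, and combining with $yv \le y$ and $xy = 0$ forces $yv = 0$.

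For the perspectivity $z \sim w$ my plan is to reduce the problem to Fact \ref{f2} by passing to complements of $zw = xu$ inside $z$ and $w$ via Fact \ref{f3a}. Since $x \sim u$, Fact \ref{f3a} yields $x' \le x$ and $u' \le u$ with $x = x' \oplus xu$, $u = u' \oplus xu$, and $x' \sim u'$; its ``moreover'' clause gives $x \oplus u' = x + u = u \oplus x'$, so $xu' = ux' = 0$ and in particular $x' \cap u' = 0$. Setting $z' = x' + y$ and $w' = u' + v$, a routine modularity check (using $xu \le x$, $xy = 0$, $yv = 0$, and their $u,v$-analogues) produces $z = z' \oplus xu$ and $w = w' \oplus xu$, so a second appeal to Fact \ref{f3a} reduces the goal to $z' \sim w'$.

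I would then feed the pairs $x' \sim u'$ and $y \sim v$ into Fact \ref{f2}. Because $x'u' = yv = 0$, its hypothesis collapses to $(x'+u')(y+v) = 0$, which I regard as the crux. I would split it in two. First, invoke Fact \ref{f1} with $a = z$, $b = w$: the embedding $[zw,z] \times [zw,w] \hookrightarrow [zw,z+w]$ carries $(x,u) \mapsto x+u$ and $(y+zw, v+zw) \mapsto y + v + zw$, and since it preserves meets, modularity (using $zw \le x$ and $zw \le u$) gives
$$(x+u)(y+v+zw) = x(y+zw) + u(v+zw) = zw + zw = zw;$$
a further modularity step (with $zw \le x+u$) then yields $(x+u)(y+v) \le zw = xu$. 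Second, modularity with $x' \le x$ and $xu' = 0$ gives $x(x'+u') = x' + xu' = x'$, hence $(x'+u') \cap xu \le x' \cap xu = 0$. Together, $(x'+u')(y+v) \le (x+u)(y+v) \le xu$ and $(x'+u')(y+v) \le x' + u'$ jointly force $(x'+u')(y+v) \le (x'+u') \cap xu = 0$.

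The main obstacle will be the inequality $(x+u)(y+v) \le zw$, since this is the only step where the hypothesis $zw = xu$ is used in a nontrivial way, and exploiting it seems to demand the componentwise decomposition afforded by Fact \ref{f1}. Everything else is routine perspectivity-and-modularity bookkeeping: once the two inequalities above are secured, Fact \ref{f2} delivers $x' + y \sim u' + v$, i.e. $z' \sim w'$, and Fact \ref{f3a} then closes with $z \sim w$.
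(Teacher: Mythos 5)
Your computation of $yv=0$ and every individual lattice identity in the argument checks out, but there is one genuine gap, and it sits at the very first step of the perspectivity part: you assume that the relative complements $x'$ and $u'$ exist. Fact~\ref{f3a} does not produce them --- it only asserts that \emph{for such} $x,y$, i.e.\ whenever decompositions $a=x\oplus ab$ and $b=y\oplus ab$ happen to exist, perspectivity transfers. The lemma is stated for an arbitrary modular lattice, where the interval $[0,x]$ need not be complemented, so $xu$ may have no complement in $[0,x]$ at all. A concrete instance: in the submodule lattice of $(\mathbb{Z}/p^2)^2$ with generators $e_1,e_2$, take $x=\langle e_1\rangle$, $u=\langle e_1+pe_2\rangle$, $y=v=0$, $z=x$, $w=u$. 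Then $zw=xu=\langle pe_1\rangle$ and $x\sim_c u$ for $c=\langle pe_1,pe_2\rangle$, so all hypotheses hold; but $[0,x]$ is the three-element chain $0<\langle pe_1\rangle<\langle e_1\rangle$, in which $xu$ has no complement, so no $x'$ exists and your proof cannot start. Your argument therefore proves the lemma only under an extra relative-complementation hypothesis (e.g.\ in a CML --- which would in fact cover every application made of the lemma in the paper, but not the statement as given).

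Apart from this, your route is a legitimate and genuinely different one: you split $xu$ \emph{off} from $x$ and $u$ and feed $(x',u')$ and $(y,v)$ into Fact~\ref{f2}, whereas the paper adjoins $xu$ \emph{to} $y$ and $v$ and feeds $(x,u)$ and $(y+xu,v+xu)$ into Fact~\ref{f2}; joins with $xu$ always exist, which is exactly why the paper's direction survives in a bare modular lattice. Your key inequality $(x+u)(y+v)\le zw$ via the meet-preserving embedding of Fact~\ref{f1} is correct and is essentially the paper's central computation, except that the paper gets the sharper $(y+v)(x+u)=0$ from $(x+w)y=0$ and $(u+z)v=0$; with that equality your second inequality $(x'+u')\cap xu=0$ becomes unnecessary. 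If you simply reverse the direction of transport --- establish $(y+v)(x+u)=0$, conclude that $r\mapsto r+xu$ is an isomorphism of $[0,y+v]$ onto $[xu,y+v+xu]$, hence $y+xu\sim v+xu$, and then apply Fact~\ref{f2} to $(x,u)$ and $(y+xu,v+xu)$ --- the rest of your bookkeeping goes through and no complements are needed.
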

\begin{proof}
Clearly, $yv\leq yzw \leq yx=0$.  Moreover, by modularity one has 
$(x+w)z=x+wz=x$ whence $(x+w)y=(x+w)zy =xy=0$
and, similarly, $(u+z)v=0$. Now, by Fact~\ref{f1}
$(x+w)(u+z)= x+u$  it follows
$(y+v)(x+u)= (y+v)(x+w)(u+z)= [y(x+w) +v)](u+z) =
v(u+z)=0 $ by modularity.  In particular, this implies
$xu(y+v)=0$ establishing the isomorphism $r \mapsto r+xu$ 
of $[0,y+v]$ onto $[xu,y+xu+v]$. 
Thus, if  $y\sim v$ then one has also  $y+xu \sim v+xu$.
Assuming that, in addition, $x \sim u$
one derives $z \sim w$ by Fact~\ref{f2}
since  $(x+u)(y+xu+v)= (x+u)(y+v)+xu =xu$.
\end{proof}

\begin{figure} 
\setlength{\unitlength}{8mm}
\begin{picture}(8.5,8)

\put(0,5.3){$a+d$}

\put(4.7,5.3){$b+d$}

\put(4.4,3){$c$}

\put(4.4,3){$c$}

\put(4.15,3.2){\circle*{0.2}}

\put(3,1.5){\line(2,3){1.18}}

\put(3.7,4.5){\line(1,-3){.48}}

\put(3.7,4.5){\circle*{0.2}}

\put(3.7,5.5){\circle*{0.2}}

\put(3,4){\circle*{0.2}}

\put(3.7,4.5){\line(0,1){1}}
\put(3,3){\line(0,-1){3}}
\put(3,1.5){\circle*{0.2}}

\put(3,7){\line(1,-2){.72}}
\put(3,6){\line(1,-2){.72}}
\put(3,3){\line(1,2){.72}}
\put(3,4){\line(1,2){.72}}

%\put(4,3){\circle*{0.2}}
\put(3,4){\circle*{0.2}}

%\put(4,3){\line(-1,3){1}}
%\put(4,3){\line(-1,-3){1}}

\put(0,3){\circle*{0.2}} 
\put(1.5,5.5){\circle*{0.2}}
\put(4.5,5.5){\circle*{0.2}} 
 
%\put(0,4.5){\circle*{0.2}} 
\put(4.5,5.5){\line(-1,1){1.5}}
\put(1.5,5.5){\line(1,1){1.5}}
\put(2.6,3.6){$d$}
\put(3.2,2.8){$u$}

 \put(-.5,2.7){$a$} 
\put(6,3){\circle*{0.2}}  
\put(6.2,2.7){$b$}  
\put(3,3){\line(0,1){1}}
%\put(3,4){\circle*{0.2}}
\put(3,4){\line(1,1){1.5}}
\put(3,4){\line(-1,1){1.5}}
\put(1.5,4.5){\line(0,1){1}}
\put(4.5,4.5){\line(0,1){1}}
\put(3,6){\line(0,1){1}}
 \put(3,7){\circle*{0.2}} 
%\put(3.3,4){$a_{12}^2$}
\put(1.5,1.5){\circle*{0.2}}    
\put(3,3){\circle*{0.2}}
\put(4.5,1.5){\circle*{0.2}}  
%\put(5,1.3){$d_2$}  
\put(3,0){\circle*{0.2}}
\put(1.5,4.5){\circle*{0.2}}
\put(4.5,4.5){\circle*{0.2}}
\put(3,6){\circle*{0.2}}
\put(0,3){\line(1,1){3}}
\put(0,3){\line(1,-1){3}} 
\put(1.5,1.5){\line(1,1){3}} 
\put(3,0){\line(1,1){3}} 
\put(1.5,4.5){\line(1,-1){3}}
\put(3,6){\line(1,-1){3}}
\end{picture}
\caption{Lemma 5}\label{fig2}
\end{figure}

\begin{lem}\lab{l2}
In modular lattice, $L$,  $a\sim b$ and  $d=(a+d)(b+d)$
jointly imply $a+d\sim b+d$.
\end{lem}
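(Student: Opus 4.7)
The plan is to exhibit a common complement of $a+d$ and $b+d$, which by the $\sim_c$ characterization yields $a+d \sim b+d$. The natural candidate is $c+d$, where $c$ witnesses $a \sim_c b$: $a+c = b+c = a+b$ and $ac = bc = ab$. Note that the hypothesis immediately forces $ab \leq (a+d)(b+d) = d$.

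The join conditions fall out with no work: $(a+d)+(c+d) = a+c+d = (a+b)+d = (a+d)+(b+d)$, and symmetrically for $b+d$. The meet conditions are where the content lies. Applying modularity (with $d \leq a+d$) yields $(a+d)(c+d) = d + c(a+d)$, and symmetrically $(b+d)(c+d) = d + c(b+d)$. So the whole task reduces to showing $c(a+d) \leq d$ and $c(b+d) \leq d$. For the first of these, $c \leq a+b$ gives $c(a+d) \leq (a+b)(a+d) = a + (a+b)d$, and one pushes this down to $d$ by meeting with $c$ and exploiting $ac = bc = ab$ together with the hypothesis, which equivalently says $b(a+d) = bd$ and $a(b+d) = ad$.

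The main obstacle will be closing that final reduction symmetrically for both sides from the single hypothesis $(a+d)(b+d) = d$. I anticipate this is cleanest via Lemma~\ref{l1}: use Fact~\ref{f3a} to pick $x,y$ with $a = x \oplus ab$, $b = y \oplus ab$, and $x \sim y$; since $ab \leq d$, this collapses to $a+d = x+d$ and $b+d = y+d$. An auxiliary element --- plausibly the $u$ of Figure~\ref{fig2}, which looks to be built from the $\sim_c$ data together with the intersection of $d$ with the $a,b,c$ configuration --- should then supply decompositions $z = a+d$, $w = b+d$ matching the meet condition $zw = xu$ of Lemma~\ref{l1}, with the summands perspective in pairs. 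Combining the resulting perspectivities via Fact~\ref{f2} (or equivalently, reading off $c+d$ as the common complement from the geometry of the completed diagram) then delivers $a+d \sim b+d$.
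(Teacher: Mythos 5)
Your choice of axis $c+d$ is the right one (it is exactly the element the paper's proof ends up exhibiting), the join conditions are indeed immediate, and the reduction of the meet conditions via modularity to $c(a+d)\leq d$ and $c(b+d)\leq d$ is correct. But that reduction is the entire content of the lemma, and the step you defer --- ``one pushes this down to $d$ by meeting with $c$ and exploiting $ac=bc=ab$ together with the hypothesis'' --- cannot be carried out from the two stated hypotheses. Concretely, in $M_3$ with atoms $a,b,c$ and $d=a$ one has $a\sim_c b$ and $(a+d)(b+d)=a\cdot 1=a=d$, yet $c(b+d)=c\cdot 1=c\not\leq d$; so the inequality you need is simply not a consequence of $a\sim_c b$ together with $d=(a+d)(b+d)$. (In that configuration $a+d=a$ and $b+d=1$ are not perspective at all, so no route whatsoever can succeed from these hypotheses alone; some further relation among $a,b,c,d$ must be brought into play.) Your fallback sketch in the second paragraph --- Fact~\ref{f3a} plus Lemma~\ref{l1} plus an unspecified auxiliary $u$ --- never identifies what that extra input is, so it remains a restatement of the problem rather than a proof.

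The paper's own argument is of a completely different nature: it regards the sublattice $S$ generated by $a,b,c,d$ as generated by the single element $c$ and the two $2$-chains $a\leq a+d$, $b\leq b+d$, invokes the structure theory of such modular lattices from \cite{hkw} to conclude that $S$ is a subdirect product of copies of $D_2$ and $M_3$ in which $d$ and $a+b$ are neutral, and then transports $a\sim_c b$ through the isomorphism of $[d(a+b),a+b]$ onto $[d,a+b+d]$ to obtain $a+d\sim_{c+d}b+d$. The neutrality of $d$ in $S$ is precisely what your computation lacks and precisely what fails in the $M_3$ configuration above (where $d$ is an atom of an $M_3$ factor); so if you want to salvage a direct calculation you must first isolate and justify the additional relation that excludes that configuration, rather than working only from $a\sim_c b$ and $d=(a+d)(b+d)$.
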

\begin{proof}  
Assume $a\sim_c b$ in $L$
and let $S$ denote  the sublattice  $S$ generated 
 by
 $a,b,c,d$. 
Let  $D_2$ and $M_3$ denote the $2$-element lattice and the height $2$ 
lattice with $3$ atoms, respectively. 
Obviously. $S$ is also generated  by $c$,
and the two  chains  $a \leq a+d$ and $b\leq b+d$.
Thus, by \cite{hkw} $S$ is a sub direct product
of lattices $D_2$, $M_3$, and lattices  $M$ 
where (the images of)  $a, a+d,b,b+d$ generate a boolean sublattice
with $(a*b)(a+d)(b+d)=0$ and $a+d+b=1$
and where (the image of) $c$ is a common complement of
the ``atoms''
$d=(a+d)(b+d)$, $a$, and $b$. 
Since also  $a\sim_c b$, 
$M$ is easily  seen to be trivial in both cases..

Thus, $S$ is a subdirect product of factors  $D_2$ and $M_3$, only.
Due to the given relations,    in any of these
factors the images of $d$ and $a+b$ take value
$0$ or $1$, only;
this means that $a+b$ and $d$  are neutral elements  of $S$.
It follows, that $u=d(a+b)$ is neutral, too, 
whence $a\sim_c b$ implies $a+u\sim_{c+u} b+u$ and this in turn $a+d \sim_{c+d} b+d$ 
via the isomorphism of $[u,a+b]$ onto $[d,1]$.
\end{proof}

\subsection{Partial isomorphisms and perspectivity}\lab{part}\lab{perp}
Motivated by the case where $L$ is the lattice of principal right ideals
of a regular ring, for any CML $L$ we consider
 lattice  isomorphisms $\alpha: [0,e] \to [0,f]$,
shortly written as $\alpha:e \to f$. 
For $g=e\cap f$ one has both $\alpha^{-1}(g)$ and $\alpha(g)$ 
defined. 
If $e'\leq e$ let $\alpha_{|e'}$
denote the restriction of $\alpha$ to $[0,e']$,
that is $\alpha_{|e'}:e' \to f'$, $f'=\alpha(e')$. 

\begin{obs}\lab{obs}
 $\alpha^{-1}(g)\cap \alpha(g)\leq g$ and  equality holds  if and only if $\alpha(g)=g$. 
\end{obs}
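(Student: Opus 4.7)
The plan is to prove this directly using only that $\alpha$ and $\alpha^{-1}$ are mutually inverse order isomorphisms between $[0,e]$ and $[0,f]$; no lattice-theoretic machinery beyond monotonicity is needed.

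First I would note that $g = e \cap f$ satisfies $g \leq e$ and $g \leq f$, so $\alpha(g)$ is defined (as $g \in [0,e]$) and $\alpha^{-1}(g)$ is defined (as $g \in [0,f]$). Moreover $\alpha(g) \in [0,f]$ and $\alpha^{-1}(g) \in [0,e]$. The inequality $\alpha^{-1}(g) \cap \alpha(g) \leq g$ is then immediate: the meet lies below $e$ (because $\alpha^{-1}(g) \leq e$) and below $f$ (because $\alpha(g) \leq f$), hence below $e \cap f = g$.

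For the characterization of equality, the easy direction is that $\alpha(g) = g$ implies $\alpha^{-1}(g) = g$ (apply $\alpha^{-1}$ to both sides), and then the meet becomes $g \cap g = g$. For the converse I would observe that if $\alpha^{-1}(g) \cap \alpha(g) = g$, then $g \leq \alpha^{-1}(g)$ and $g \leq \alpha(g)$. Applying the monotone map $\alpha$ to the first inequality yields $\alpha(g) \leq \alpha(\alpha^{-1}(g)) = g$, and combined with $g \leq \alpha(g)$ this forces $\alpha(g) = g$.

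There is really no main obstacle here; the statement is essentially a bookkeeping remark about the domain and codomain of $\alpha$ together with the fact that an order isomorphism preserves and reflects the order. The only subtlety worth flagging is to make sure $\alpha^{-1}(g)$ is well defined, which is exactly the assertion that $g \leq f$, guaranteed by $g = e \cap f$.
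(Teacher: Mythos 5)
Your proof is correct and follows essentially the same route as the paper's: the meet is bounded by $e\cap f=g$ via monotonicity, the forward direction is immediate, and the converse extracts $g\le\alpha^{-1}(g)$ from the equality and applies $\alpha$ to get $\alpha(g)\le\alpha(\alpha^{-1}(g))=g$. No issues.
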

\begin{proof}
Clearly,
 $\alpha^{-1}(g)\cap \alpha(g)\leq \alpha^{-1}(f)\cap \alpha(e)= e \cap f =g$. 
Also, if $\alpha(g)=g$ then $\alpha^{-1}(g)=g$ 
whence  $\alpha^{-1}(g)\cap \alpha(g)=g$.
Conversely, if the latter holds then
$g \leq \alpha(g)$ and  $\alpha(g) \leq 
\alpha(\alpha^{-1}(g)) =g$ and it follows $g=\alpha(g)$.  
\end{proof}
We introduce a reduction process which yields perspectivity provided
that it stops after finitely many steps.
With $g=e\cap f$ one has $\alpha^{\#}:\alpha^{-1}(g)\to \alpha(g)$ 
defined by $\alpha^{\#}(x)=\alpha^2(x)$.

We consider \emph{admissible} systems $A$ of  isomorphisms $\alpha:e \to f$
 requiring the following axioms:
\begin{itemize}
\item[(A1)] If $\alpha:e \to f$ is in $A$ and  $e'\leq e$
then $\alpha_{|e'}$ is in $A$. 
\item[(A2)] If $\alpha:e \to f$ is in $A$ and $e\cap f=0$ then
$e \approx f$.
\item[(A3)] If $\alpha:e \to f$ is in $A$ and $g=e\cap f$
then $\alpha^{\#}$ is in $A$.
\end{itemize}

\begin{lem}\lab{l5} Consider  $\alpha:e\to f$  in admissible $A$.
\begin{itemize}
\item[(i)] If $\alpha(e \cap f)= e\cap f$ then $e \sim f$.
\item[(ii)] If $\alpha^{-1}(e\cap f) \sim \alpha(e\cap f)$ then $e \sim f$.
\end{itemize}
\end{lem}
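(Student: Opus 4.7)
My strategy is as follows. Part (i) is essentially the degenerate case of part (ii): if $\alpha(g)=g$ with $g=e\cap f$, then the Observation forces $\alpha^{-1}(g)=\alpha(g)=g$, so the hypothesis of (ii) becomes $g\sim g$ (trivially true by any complement of $g$). However, a direct argument for (i) is shorter and previews the method, so I would dispatch (i) first and then treat (ii) by a decomposition that handles the ``meet-relevant'' part with the hypothesis and Lemma~\ref{l2}, and a disjoint ``remainder'' with (A2); the two perspectivities are combined by Lemma~\ref{l1}.

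For (i), I pick $x$ with $e=x\oplus g$ and set $y=\alpha(x)$; since $\alpha(g)=g$, we get $f=\alpha(x)\oplus\alpha(g)=y\oplus g$. The restriction $\alpha|_x:x\to y$ belongs to $A$ by (A1), and the meet chain $x\cap y\leq e\cap f=g$ together with $x\cap g=0$ forces $x\cap y=0$. So (A2) yields $x\sim y$, and Fact~\ref{f3a} upgrades this to $e\sim f$.

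For (ii), set $e_1=\alpha^{-1}(g)$ and $f_1=\alpha(g)$, choose $e'$ with $e=(e_1+g)\oplus e'$, and set $f''=\alpha(e')$. Because $\alpha$ is an isomorphism sending $e_1+g$ to $g+f_1$, this immediately gives $f=(g+f_1)\oplus f''$. On the remainder, $e'\cap f''\leq e\cap f=g$ and $e'\cap g\leq e'\cap (e_1+g)=0$ force $e'\cap f''=0$, so (A2) applied to $\alpha|_{e'}:e'\to f''$ yields $e'\sim f''$. On the core, $(e_1+g)(f_1+g)\leq e\cap f=g$ (and the reverse inequality is trivial), so Lemma~\ref{l2} with $a=e_1,\, b=f_1,\, d=g$ promotes the hypothesis $e_1\sim f_1$ to $e_1+g\sim f_1+g$. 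Finally, Lemma~\ref{l1} applied with $z=e$, $x=e_1+g$, $y=e'$, $w=f$, $u=f_1+g$, $v=f''$ concludes $e\sim f$; its key hypothesis $zw=xu$ is precisely the meet identity $g=(e_1+g)(f_1+g)=e\cap f$ just verified.

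The one genuinely substantive step is picking the decomposition $e=(e_1+g)\oplus e'$: once this is made, every relevant meet within $[0,e]$ and $[0,f]$ is forced below $g$ by the containment in $e\cap f$, and Lemmas~\ref{l1},~\ref{l2} and axiom (A2) then combine by bookkeeping alone. I expect no other serious obstacle.
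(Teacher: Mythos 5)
Your proof is correct and follows essentially the same route as the paper's: part (ii) uses exactly the decomposition $e=(\alpha^{-1}(g)+g)\oplus e'$, promotes the hypothesis to $\alpha^{-1}(g)+g\sim\alpha(g)+g$ via Lemma~\ref{l2}, handles the complement via (A1)/(A2), and assembles the result with Lemma~\ref{l1}. The only cosmetic differences are that you verify the zero meets ($x\cap y=0$, $e'\cap f''=0$) directly rather than via the first assertion of Lemma~\ref{l1}, and in part (i) you finish with Fact~\ref{f3a} instead of a second application of Lemma~\ref{l1}.
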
 
\begin{proof} Let $g=e \cap f$.
In (i) choose $y$ so that  $y\oplus g=e$ and put $v=\alpha(y)$.
Then $f= \alpha(e)= \alpha(y \oplus g)= \alpha(y)\oplus \alpha(g)
=v \oplus g$. With $x=u=g$ in Lemma~\ref{l1} it follows
$yv=0$ whence $y\sim v$ by axioms  (A1) and  (A2);  moreover,    
 $e \sim f$ by Lemma~\ref{l1}, again.

(ii): Put $x=g+\alpha^{-1}(g)$ and $u=g+\alpha(g)$, observe   that $u=\alpha(x)$. Observe that    $g\leq x \leq e$ and $g\leq u \leq f$
whence $x\cap u=g$. Thus,
 by 
hypothesis and Lemma~\ref{l2} 
one has $x\sim u$.  Choose $y$ such that $y\oplus x=e$ and put $v:=\alpha(y)$. 
Then $f=\alpha(x\oplus y) =\alpha(x) \oplus v$
and we conclude $y \cap v= 0$ by Lemma~\ref{l1}
whence $y \sim v$ by axioms (A1) and (A2);
finally, $e \sim f$ by Lemma~\ref{l1}.
\end{proof}

Given $\alpha:e \to f$ in  admissible $A$ define, by induction, $\alpha_0=\alpha$,
$e_0=e$, $f_0=f$
and $\alpha_{n+1}=\alpha_n^{\#}:e_{n+1}\to f_{n+1}$ . 
Put $g_n=e_n\cap f_n$ and observe that $e_{n+1}\leq e_n$ and
$f_{n+1} \leq f_n$ whence also $g_{n+1} \leq g_n$.

\begin{thm}\lab{main}
Given a CML, $L$, an admissible system $A$ of partial
isomorphisms in $L$, and $\alpha:e \to f$ in $A$,
one has  $e \sim f$
provided that $e_m \sim f_m$ for some $m$.
In particular, this applies 
if $\alpha_m(g_m)=g_m$ respectively $g_{m+1}=g_m$ for some $m$
or if  $e\cap f$ is of finite height in $L$.
\end{thm}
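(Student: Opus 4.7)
The plan is to treat this as essentially a direct corollary of Lemma~\ref{l5}, combined with a routine use of Observation~\ref{obs} and a finiteness argument. None of the substantive lattice work needs to be redone; the heavy lifting was already accomplished in Lemmas~\ref{l1} and~\ref{l2}, and repackaged for admissible systems in Lemma~\ref{l5}.

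For the main assertion I would proceed by downward induction on $n$, from $n=m$ down to $n=0$. The base case $n=m$ is the hypothesis. For the inductive step, I first note that iterating axiom (A3) gives $\alpha_n \in A$ for every $n$, so $\alpha_n : e_n \to f_n$ is a partial isomorphism in $A$. By construction $e_{n+1} = \alpha_n^{-1}(g_n)$ and $f_{n+1} = \alpha_n(g_n)$ with $g_n = e_n \cap f_n$, so the assumption $e_{n+1} \sim f_{n+1}$ is precisely the hypothesis of Lemma~\ref{l5}(ii) for $\alpha_n$, which yields $e_n \sim f_n$. Running the induction down to $n=0$ gives $e \sim f$.

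Next I would reduce each of the ``in particular'' cases to the perspectivity $e_m \sim f_m$ established above. If $\alpha_m(g_m) = g_m$, then Lemma~\ref{l5}(i) applied to $\alpha_m$ immediately delivers $e_m \sim f_m$. If instead $g_{m+1} = g_m$, note that $g_{m+1} = e_{m+1} \cap f_{m+1} = \alpha_m^{-1}(g_m) \cap \alpha_m(g_m)$, so the equality case of Observation~\ref{obs} forces $\alpha_m(g_m) = g_m$, reducing to the previous case. Finally, in the finite height case, the chain $g_0 \geq g_1 \geq g_2 \geq \ldots$ lives in $[0, e\cap f]$, which has finite height by hypothesis, so it must stabilise; hence $g_{m+1} = g_m$ for some $m$, returning us once more to the previous case.

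The main ``obstacle'' is really only bookkeeping: verifying that $\alpha_n \in A$ along the iteration (immediate from (A3)), that the identifications $e_{n+1} = \alpha_n^{-1}(g_n)$, $f_{n+1} = \alpha_n(g_n)$ are exactly the shapes needed by Lemma~\ref{l5}(i) and (ii), and that a descending chain in an interval of finite height must stabilise. None of these requires a new lattice calculation, so the proof should be short and should read as a clean harvesting of the machinery prepared earlier in the section.
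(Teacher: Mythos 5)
Your proposal is correct and follows essentially the same route as the paper: downward induction on $n$ using Lemma~\ref{l5}(ii) for the main assertion, Lemma~\ref{l5}(i) for the case $\alpha_m(g_m)=g_m$, and Observation~\ref{obs} together with stabilisation of the descending chain $(g_n)$ for the remaining cases. The only difference is presentational — you spell out the reduction $g_{m+1}=g_m \Rightarrow \alpha_m(g_m)=g_m$ and the membership $\alpha_n\in A$ slightly more explicitly than the paper does.
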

\begin{proof} 
 If   $e_{m}\sim f_{m}$ (which by (i) of Lemma~\ref{l5} is the case if
$\alpha_m(g_m)=g_m$) then $e_{m-1} \sim f_{m-1}$ by (ii) of Lemma~\ref{l5}
and it follows $e\sim f$ by induction. 
Now, assume $e\cap f$ of finite height in $L$.  By Observation~\ref{obs},
$g_{m+1} < g_m$
 unless $\alpha_m(g_m)=g_m$; thus, the latter has to occur for some $m$.
 \end{proof}

We now give for the case $g_{m+1}=g_m$ a proof with a
single application of Lemma~\ref{l2}, only.
A sequence $a_0, \ldots, a_m$  
in a modular lattice is \emph{independent}, if for all $n<m$,
 $a_n \sum_{n<k\leq m} a_k=0$ ; 
 equivalently
if $(\sum_{n \in I}a_n)(\sum_{n \in J}a_n)=0$ for all
$I,J \subseteq \{0,\ldots ,m\}$  such that $I\cap J=\emptyset$.
Induction using Fact~\ref{f2} yields the following.

\begin{fact}\lab{f5a}
In a modular lattice, if $a_0+b_0,a_1+b_1, \ldots, a_m+b_m$ 
is independent and $a_n \approx b_n$ for all $n\leq m$
then $\sum_{n\leq m} a_n \approx \sum_{n\leq m} b_n$. 
\end{fact}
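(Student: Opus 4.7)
The natural approach is induction on $m$, as the hint suggests.

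The base case $m=0$ is immediate from the hypothesis $a_0\approx b_0$. For the inductive step, fix witnesses $c_n$ with $a_n\approx_{c_n}b_n$ and set $A=\sum_{n<m}a_n$, $B=\sum_{n<m}b_n$, $C=\sum_{n<m}c_n$. By the induction hypothesis $A\approx_C B$, so $AB=0$ and $A+B=A+C=B+C=\sum_{n<m}(a_n+b_n)$.

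I would then apply Fact~\ref{f2} to the two perspective triangles $(A,B,C)$ and $(a_m,b_m,c_m)$. Its hypothesis $(A+B)(a_m+b_m)\leq AB\cdot a_mb_m$ holds because both sides vanish: the right side because $A\approx B$ and $a_m\approx b_m$, and the left side by the assumed independence of $a_0+b_0,\ldots,a_m+b_m$ combined with the identity $A+B=\sum_{n<m}(a_n+b_n)$. Fact~\ref{f2} then yields $A+a_m\sim_{C+c_m}B+b_m$.

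To upgrade from $\sim$ to $\approx$, one still needs $(A+a_m)(B+b_m)=0$. Here I would invoke Fact~\ref{f1}: since $(A+B)(a_m+b_m)=0$, the map $(x,y)\mapsto x+y$ embeds $[0,A+B]\times[0,a_m+b_m]$ into $[0,A+B+a_m+b_m]$; under it $A+a_m$ and $B+b_m$ are the images of $(A,a_m)$ and $(B,b_m)$, whose meet in the product is $(AB,a_mb_m)=(0,0)$. Since the embedding preserves meets, $(A+a_m)(B+b_m)=0$.

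The step requiring the most care is this last one. The conclusion is phrased with $\approx$ rather than mere $\sim$, so pairwise independence of the $a_n+b_n$ must be promoted to disjointness of the partial sums $\sum a_n$ and $\sum b_n$. Fact~\ref{f1} is the natural tool, but one must verify the correspondence under the product decomposition and keep track of which summand of $A+a_m$ lies in which factor.
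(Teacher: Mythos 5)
Your proof is correct and follows exactly the route the paper intends: the paper's entire argument is the one-line remark ``Induction using Fact~\ref{f2}'', and you carry out that induction, correctly reading Fact~\ref{f2} as yielding $A+a_m\sim_{C+c_m}B+b_m$ and supplying the needed disjointness $(A+a_m)(B+b_m)=0$ via the product embedding of Fact~\ref{f1}. The only (harmless) presentational point is that to quote the induction hypothesis in the form $A\approx_C B$ with $C=\sum_{n<m}c_n$ you should officially strengthen the statement being proved to include this witness, which Fact~\ref{f2} delivers automatically.
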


\begin{lem}\lab{ind}
Consider $\alpha_n:e_n\to f_n$ and $g_n$, $n\leq m+1$ as above
and, for $n\leq m$,
 $x_n$ such that $e_n= x_n \oplus (e_{n+1}+g_n)$ 
and $y_n=\alpha_n(x_n)$.
Then the following hold.
\begin{itemize}
\item[(i)] $f_{n}= y_n \oplus(g_n+ 
f_{n+1}) $ and $(e_{k}+f_{k})(x_n+y_n)=0$ for all $k>n$.
\item[(ii)] $x_n\approx y_n$ for all $n \leq m$.
\item[(iii)]   $x_0,y_0, \ldots ,x_m,y_m$ is an independent sequence.
\item[(iv)]
$x:=\sum_{k\leq m}x_k \approx y:=\sum_{k\leq m}y_k$.
\item[(v)] If $g_m=g_{m+1}$ then  $x+g_0=e_0$, 
 $y+g_o=f_0$, and $e_0\sim f_0$. 
\end{itemize}
\end{lem}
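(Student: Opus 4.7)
The plan is to prove (i) through (v) in the stated order, since each part feeds directly into the next: (i) underwrites both the independence in (iii) and the disjointness used in (ii); together these power (iv) via Fact~\ref{f5a}; and (v) telescopes (iv) to reach $e_0 \sim f_0$. The central fact I will exploit throughout is that $\alpha_n$ is a lattice isomorphism $[0, e_n] \to [0, f_n]$ with $\alpha_n(e_{n+1}) = g_n$ and $\alpha_n(g_n) = f_{n+1}$, so any direct decomposition of $e_n$ transports to one of $f_n$.

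For (i), applying $\alpha_n$ to $e_n = x_n \oplus (e_{n+1} + g_n)$ immediately yields $f_n = y_n \oplus (g_n + f_{n+1})$. The second assertion reduces, since the chains $e_k, f_k$ are decreasing, to showing $(x_n + y_n) \cap (e_{n+1} + f_{n+1}) = 0$. The clean route is to lift both sides by $g_n$ and work in $[g_n, e_n + f_n]$, where Fact~\ref{f1} provides the embedding $[g_n, e_n] \times [g_n, f_n] \hookrightarrow [g_n, e_n + f_n]$ via $(a, b) \mapsto a + b$. Under it, $x_n + y_n + g_n$ corresponds to $(x_n + g_n, y_n + g_n)$ and $e_{n+1} + f_{n+1} + g_n$ to $(e_{n+1} + g_n, f_{n+1} + g_n)$; the componentwise meets collapse to $(g_n, g_n)$ by modularity (using $x_n \cap (e_{n+1} + g_n) = 0$ and its analogue in $[0, f_n]$). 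A final check that $(x_n + y_n) \cap g_n = 0$ (since $y_n \cap e_n \leq y_n \cap g_n = 0$ forces $(x_n + y_n) \cap e_n = x_n$) then removes the lift.

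Parts (ii)--(iv) are short. For (ii), $x_n \cap y_n \leq (x_n + y_n) \cap g_n = 0$ by (i), so (A1) places $\alpha_n|_{x_n}$ in $A$ and (A2) delivers $x_n \approx y_n$. For (iii), (i) already gives independence of $(x_0 + y_0), \ldots, (x_m + y_m)$; combined with $x_n \oplus y_n = x_n + y_n$ for each $n$, a routine modular argument promotes this to independence of the interleaved sequence $x_0, y_0, \ldots, x_m, y_m$. Part (iv) is then Fact~\ref{f5a} applied to the pairs $(x_n, y_n)$.

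For (v), the hypothesis $g_{m+1} = g_m$ together with Observation~\ref{obs} forces $\alpha_m(g_m) = g_m$, hence $e_{m+1} = f_{m+1} = g_m$. Telescoping $e_k = x_k + e_{k+1} + g_k$ while absorbing each $g_k \leq g_0$ gives $e_0 = x_0 + \cdots + x_m + e_{m+1} + g_0 = x + g_0$, and symmetrically $f_0 = y + g_0$. Since $(x + g_0)(y + g_0) = e_0 \cap f_0 = g_0$ and $x \sim y$ follows from (iv), Lemma~\ref{l2} with $a = x$, $b = y$, $d = g_0$ concludes $e_0 \sim f_0$. The main obstacle throughout is the second half of (i): everything downstream is routine once $(x_n + y_n) \cap (e_{n+1} + f_{n+1}) = 0$ is in hand, and it is precisely this disjointness that forces routing through the product embedding of Fact~\ref{f1} rather than working naively inside $[0, e_n]$ or $[0, f_n]$.
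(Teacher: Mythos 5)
Your proof is correct, and its overall architecture --- establishing (i) first, then deducing (ii)--(iv) via axioms (A1)/(A2) and Fact~\ref{f5a}, and telescoping in (v) --- matches the paper's. The one genuinely different step is the key disjointness $(x_n+y_n)(e_{n+1}+f_{n+1})=0$ in (i): the paper obtains it by a direct two-stage modular computation (first $(e_{n+1}+f_{n+1})(x_n+y_n)\leq(e_{n+1}+f_n)(x_n+y_n)=\dots=y_n$, then $(e_{n+1}+f_{n+1})y_n\leq(g_n+f_{n+1})y_n=0$), whereas you route through the product embedding of Fact~\ref{f1} applied in the interval $[g_n,e_n+f_n]$, which computes the meet componentwise as $g_n$ and leaves only the check $(x_n+y_n)g_n=0$. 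Your version is somewhat more conceptual, making visible how each of the two coordinates $[g_n,e_n]$ and $[g_n,f_n]$ kills its half of the meet; the paper's is a shorter raw calculation, and as a by-product it isolates the intermediate identity $(e_{n+1}+f_{n+1})(x_n+y_n)=(e_{n+1}+f_{n+1})y_n$ rather than the identity $(x_n+y_n)g_n=0$ that you extract. A second, minor divergence is in (v): the paper concludes $e_0\sim f_0$ by citing Fact~\ref{f2} (which, applied with $a_2=b_2=g_0$, would additionally require the verification $(x+y)g_0=0$), while you invoke Lemma~\ref{l2} with $d=g_0$, for which the needed hypothesis $(x+g_0)(y+g_0)=g_0$ is immediate from $e_0f_0=g_0$; your choice is the cleaner one at that point.
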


\begin{proof} 
 Recall that
$e_{n+1}=\alpha_n^{-1}(g_n) \leq e_n$ and  
$f_{n+1}=\alpha_n(g_n)=\alpha_n^2(e_{n+1})\leq f_n$
and observe that $e_n=x_n \oplus(
\alpha_n^{-1}(g_n) +g_n)$. It follows that
$f_{n}= \alpha_n(e_n)=  y_n \oplus (g_n +\alpha_n(g_n))=y_n \oplus(g_n+ 
f_{n+1}) $ for $n\leq m$. Observe that $x_n y_n 
\leq e_n  f_n =g_n$ whence $x_ny_n \leq x_n g_n=0$. 
Thus $x_n\approx y_n$ by axiom (A2).

Moreover,  modularity yields 
$(e_{n+1}+ f_{n+1}) (x_n+y_n) \leq (e_{n+1}+f_n) (x_n+y_n)
=(e_{n+1}+f_n)x_n +y_n 
= (e_{n+1}+f_n)e_n x_n +y_n = (e_{n+1} +f_n e_n)x_n +y_n
=(e_{n+1} +g_n)x_n+y_n =0+y_n=y_n  $
and so $(e_{n+1} +f_{n+1})(x_n+y_n) =
(e_{n+1} +f_{n+1})y_n = (e_{n+1} +f_{n+1})f_ny_n=
(e_{n+1}f_n +f_{n+1})y_n \leq (g_n +f_{n+1})y_n =0$. 
It follows for all $n< m$
\[   \left( \sum_{n<k \leq m} x_k+y_k\right)(x_n+y_n)\leq 
(e_{n+1}+f_{n+1})(x_n+y_n)=0,\]
proving (iii); (iv) follows by Fact~\ref{f5a}. 
Dealing with (v), observe that $e_{m+1}=g_m$ whence $g_0+x_m\geq e_m$. Now,
backward  induction  yields $g_0+\sum_{m\geq k\geq n}x_k\geq e_n$
for all $n\leq m$, whence $g_0+x=e_0$. Similarly,
one has $g_0+y=f_0$
and  it follows $e_0\sim f_0$ by Lemma~\ref{f2}.

\end{proof}

\subsection{$\aleph_0$-complete complemented modular lattices}
A CML is $\aleph_0$-\emph{complete} if supremum $\sum_n a_n$
and infimum $\prod_n a_n$ exist for all families
$(a_n\mid n<\omega)$. According to Amemiya and Halperin \cite[Thm.9.5]{ame}
any such is also $\aleph_0$-\emph{continuous}, i.e.
$b  \sum_n a_n=\sum_n ba_n$ (resp. $b+\prod_n a_n= \prod_n (b+a_n)$)
if $(a_n\mid n<\omega)$ is upward (resp. downward) directed. 
A sequence $(a_n\mid n<\omega)$ is \emph{independent}
if each of its finite subsequences is independent.

\begin{fact}\lab{f5}
In an $\aleph_0$-complete CML, if $a_0+b_0,a_1+b_1, \ldots$ 
is independent and $a_n \approx b_n$ for all $n< \omega$
then $\sum_n a_n \approx \sum_n b_n$. 
\end{fact}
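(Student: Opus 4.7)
The plan is to reduce the countable case directly to the finite Fact~\ref{f5a} together with $\aleph_0$-continuity. Pick, for each $n$, a witness $c_n$ for $a_n\approx_{c_n} b_n$, so $a_n+b_n=a_n+c_n=b_n+c_n$ and $a_nb_n=a_nc_n=b_nc_n=0$. Set $a=\sum_n a_n$, $b=\sum_n b_n$, and $c=\sum_n c_n$; the aim is to prove $a\approx_c b$.

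First I would unfold the proof of Fact~\ref{f5a} to track the witnesses explicitly. Each application of Fact~\ref{f2} in that induction combines $A_m\sim_{C_m} B_m$ with $a_{m+1}\sim_{c_{m+1}} b_{m+1}$ (the hypothesis $(A_m+B_m)(a_{m+1}+b_{m+1})=0$ being immediate from the independence of $a_0+b_0,\ldots,a_{m+1}+b_{m+1}$), yielding $A_{m+1}\sim_{C_{m+1}} B_{m+1}$ with $C_{m+1}=C_m+c_{m+1}$. Hence for $A_m:=\sum_{n\leq m} a_n$, $B_m:=\sum_{n\leq m} b_n$, $C_m:=\sum_{n\leq m} c_n$ I obtain $A_m\approx_{C_m}B_m$ at every finite stage, i.e.\ $A_m+B_m=A_m+C_m=B_m+C_m$ and $A_mB_m=A_mC_m=B_mC_m=0$.

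Next I would pass to the limit. The sequences $(A_m)$, $(B_m)$, $(C_m)$ are ascending with suprema $a$, $b$, $c$, so the three joins $a+b$, $a+c$, $b+c$ are respectively the suprema of $A_m+B_m$, $A_m+C_m$, $B_m+C_m$; by the finite identities these three coincide. For the meets I would use $\aleph_0$-continuity twice: $ab=\bigl(\sup_m A_m\bigr)\,b=\sup_m (A_m b)$ and then $A_m b=A_m\bigl(\sup_k B_k\bigr)=\sup_k(A_m B_k)$. For arbitrary $m,k$, writing $\ell=\max(m,k)$ gives $A_mB_k\leq A_\ell B_\ell=0$, so $ab=0$; the same reasoning applied with $C_\cdot$ in place of $A_\cdot$ or $B_\cdot$ yields $ac=0$ and $bc=0$. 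Combining the two halves gives $a\approx_c b$.

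The only real obstacle is the bookkeeping in the first step: one must verify that the inductive proof of Fact~\ref{f5a} can be arranged to deliver $C_m=\sum_{n\leq m} c_n$ exactly (rather than some other common complement), since otherwise the sequence $(C_m)$ may fail to be ascending and its limit $c$ need not relate to the chosen witnesses. Once that coherence of witnesses is in hand, the transition from finite to countable is a routine application of the $\aleph_0$-continuity of $+$ and $\cdot$ on monotone sequences.
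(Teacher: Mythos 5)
Your proposal is correct and follows essentially the same route as the paper: the paper likewise forms the partial sums $a_n^+=\sum_{m\leq n}a_m$, etc., obtains $a_n^+\approx_{c_n^+}b_n^+$ from Fact~\ref{f5a} (the witness $c_1+c_2$ is already built into Fact~\ref{f2}, so the bookkeeping you worry about is automatic), and then kills the meet $a_\omega b_\omega$ by the same double application of $\aleph_0$-continuity to the directed sequences of partial sums, using $a_n^+b_m^+\leq a^+_{\max(n,m)}b^+_{\max(n,m)}=0$.
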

\begin{proof} Suppose $a_n \approx_{c_n} b_n$ for $n<\omega$
and write $x_n^+ =\sum_{m\leq n}x_m$
and $x_\omega= \sum_{n<\omega}x_n =\sum_{n<\omega}x_n^+$ for any  sequence $x_0,x_1,\ldots$.
By  Fact~\ref{f5a} one has $a_n^+ \approx_{c^+_n} b_n^+$
for all $n$. Since  sequences $x_0^+,x_1^+, \ldots$ are upward directed,
$\aleph_0$-continuity yields  \[a_\omega b_\omega = (\sum_{n<\omega} a_n^+)b_\omega
=\sum_{n<\omega} a_n^+b_\omega
=\sum_{n<\omega} a_n^+\sum_{m< \omega} b_m^+=\]
\[=\sum_{n<\omega} \sum_{m< \omega} a_n^+b_m^+ 
\leq \sum_{n,m<\omega} a^+_{\max{(n.m)}} b^+_{\max{(n,m)}} =0.\]
By symmetry one obtains $a_\omega c_\omega= b_\omega c_\omega=0$
while $a_{\omega}+b_{\omega} =a_\omega+c_\omega= b_\omega+c_\omega$
is obvious.
  \end{proof}

\begin{thm} 
Assume that   $\alpha:e \to f$
 is member of an admissible system $A$ of partial isomorphisms
of an $\aleph_0$-complete CML, $L$, and that $\prod_{n<\omega}e_n\sim
\prod_{n<\omega}f_n$ for $e_n,f_n$ defined as in Subsection~\ref{perp}.
Then it follows that $e \sim f$ in $L$.  
\end{thm}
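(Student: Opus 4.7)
The plan is to adapt the finite reduction of Lemma~\ref{ind} to the $\aleph_0$-continuous setting, writing $e_\omega = \prod_{n<\omega} e_n$ and $f_\omega = \prod_{n<\omega} f_n$ so that the hypothesis reads $e_\omega \sim f_\omega$. First, fix for each $n<\omega$ an element $x_n$ with $e_n = x_n \oplus (e_{n+1}+g_n)$ and set $y_n = \alpha_n(x_n)$. Parts~(ii) and~(iii) of Lemma~\ref{ind} apply verbatim: each $x_n \approx y_n$, and every finite initial segment $x_0,y_0,\ldots,x_m,y_m$ is independent, so the whole family $\{x_n,y_n\mid n<\omega\}$ is independent in the sense of Fact~\ref{f5}. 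Applying that fact gives $x:=\sum_n x_n \approx y:=\sum_n y_n$.

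Next, iterating $e_k = x_k + e_{k+1}+g_k$ and absorbing $g_k\leq g_0$ yields $e_0 = x_n^+ + e_{n+1}+g_0$ for every $n$, where $x_n^+=\sum_{k\leq n}x_k$; since $x_n^+\leq x\leq e_0$ this strengthens to $e_0 = x + e_{n+1}+g_0$ for every $n$. Applying $\aleph_0$-continuity to the downward-directed sequence $(e_{n+1})_n$ gives
\[
e_0 = \prod_n(x+g_0+e_{n+1}) = x+g_0+e_\omega,
\]
and symmetrically $f_0 = y+g_0+f_\omega$. Since $(e_\omega+g_0)(f_\omega+g_0)\leq e_0 f_0 = g_0$ with the reverse inclusion obvious, Lemma~\ref{l2} applied to $e_\omega \sim f_\omega$ yields $e_\omega+g_0 \sim f_\omega+g_0$.

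Finally, the plan is to combine $x\approx y$ with $e_\omega+g_0\sim f_\omega+g_0$ via Fact~\ref{f2} to conclude $e_0 = x+(e_\omega+g_0) \sim y+(f_\omega+g_0) = f_0$. Since $xy=0$, the hypothesis of Fact~\ref{f2} reduces to the key identity $(x+y)\cap(e_\omega+f_\omega+g_0)=0$, and verifying this is the main technical obstacle. My plan is to split $x+y = (x_n^+ + y_n^+) + (\xi_n+\eta_n)$ with $\xi_n=\sum_{k>n}x_k \leq e_{n+1}$ and $\eta_n=\sum_{k>n}y_k \leq f_{n+1}$, use modularity (since $\xi_n+\eta_n\leq e_{n+1}+f_{n+1}+g_0$) to rewrite
\[
(x+y)\cap(e_{n+1}+f_{n+1}+g_0) = (\xi_n+\eta_n) + (x_n^+ + y_n^+)\cap(e_{n+1}+f_{n+1}+g_0),
\]
invoke the strengthened finite identity $(x_n^+ + y_n^+)\cap(e_{n+1}+f_{n+1})=0$ (provable by induction refining the argument for Lemma~\ref{ind}(iii)), combine with the product embedding $[g_0,e_0]\times[g_0,f_0]\hookrightarrow[g_0,e_0+f_0]$ of Fact~\ref{f1} to control the $g_0$ contribution, and finally let $n\to\infty$ via $\aleph_0$-continuity of the downward-directed sequence $(e_{n+1}+f_{n+1}+g_0)_n$.
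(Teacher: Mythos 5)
Your first half coincides with the paper's proof: the same choice of $x_n,y_n$, the same independence argument and Fact~\ref{f5} giving $x\approx y$, and the same $\aleph_0$-continuity computation giving $e_0=x+g_0+e_\omega$ and $f_0=y+g_0+f_\omega$. The divergence --- and the gap --- lies in how you reassemble the perspectivities. You first absorb $g_0$ into the tail, obtaining $e_\omega+g_0\sim f_\omega+g_0$ by Lemma~\ref{l2}, and then try to join this with $x\approx y$ via Fact~\ref{f2}; as you note, this requires $(x+y)\cap(e_\omega+f_\omega+g_0)=0$. That identity is false in general, so no refinement of Lemma~\ref{ind}(iii) can deliver it. Take $L$ the subspace lattice of $F^4$ with basis $v_1,\dots,v_4$, $e_0=\langle v_1,v_2,v_3\rangle$, $f_0=\langle v_2,v_3,v_4\rangle$, and $\alpha_0$ induced by the linear isomorphism $v_1\mapsto v_2$, $v_2\mapsto v_3$, $v_3\mapsto v_4$. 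Then $g_0=\langle v_2,v_3\rangle$, $e_1=\langle v_1,v_2\rangle$, $f_1=\langle v_3,v_4\rangle$, $g_1=0$, so $e_n=f_n=0$ for $n\geq 2$; one gets $x_0=y_0=0$, $x_1=e_1$, $y_1=f_1$, hence $x+y=F^4$ and $(x+y)\cap(e_\omega+f_\omega+g_0)=g_0\neq 0$. The same example already refutes your proposed finite identity $(x_1^{+}+y_1^{+})\cap(e_2+f_2+g_0)=0$; the obstruction is that $g_0$ need not lie below, nor meet trivially, $e_n+f_n$ for $n\geq 1$, so the modularity computations of Lemma~\ref{ind} do not carry the extra summand $g_0$.

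The paper circumvents exactly this by reversing the order of your last two steps: it applies Fact~\ref{f2} to $x\approx y$ and $e_\omega\sim f_\omega$ \emph{without} $g_0$, which only needs $(x+y)(e_\omega+f_\omega)=0$ --- and this does hold, since $e_\omega+f_\omega+\sum_{n<k\le m}(x_k+y_k)\le e_{n+1}+f_{n+1}$ makes $e_\omega+f_\omega,\,x_0+y_0,\,x_1+y_1,\dots$ an independent sequence by Lemma~\ref{ind}(i), and $\aleph_0$-continuity passes this to the full joins. This yields $x+e_\omega\sim y+f_\omega$, and only then is $g_0$ absorbed, by a single application of Lemma~\ref{l2} with $d=g_0$, whose hypothesis $d=(a+d)(b+d)$ reduces to $g_0=e_0f_0$ and, crucially, tolerates $g_0$ meeting $x+y$ nontrivially. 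If you swap your final two steps accordingly, your argument becomes the paper's proof.
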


\begin{proof}
Given $\alpha:e \to f$ in $A$ define $\alpha_n:e_n\to f_n$
as in Subsection~\ref{part} and $g_n=e_nf_n$.
Put $e_\infty=\prod_n e_n$, $f_\infty= \prod_n f_n$,
 and $g_\infty=\prod_n g_n =e_\infty f_\infty$ and recall that
$e_{n+1}=\alpha_n^{-1}(g_n) \leq e_n$ and  
$f_{n+1}=\alpha_n(g_n)=\alpha_n^2(e_{n+1})\leq f_n$.

Choose $x_n$ such that $e_n= x_n \oplus (e_{n+1}+g_n)$
and  $y_n=\alpha_n(x_n)$.
By Lemma~\ref{ind} one has
$f_{n}= \alpha_n(e_n)  =y_n \oplus(g_n+ 
f_{n+1}) $ and the
  sequence $x_0,y_0,x_1,y_1, \ldots $ is  independent; thus,
 Fact~\ref{f5} yields
$ x_\omega \approx y_\omega$
where  $x_\omega=\sum_n x_n$ and $y_\omega=\sum_n y_n$. 
On the other hand, again by Lemma~\ref{ind} and modularity,
\[   \left(e_\infty +f_\infty+ \sum_{n<k \leq m} x_k+y_k\right)(x_n+y_n)=0
\mbox{ for all } m> n
\]
that is $e_\infty+f_\infty, x_0+y_0, x_1+y_1, \ldots    $ 
is an independent sequence, too.
By hypothesis $e_\infty \sim f_\infty$ and
in view of $(x_\omega +y_\omega)(e_\infty +f_\infty)=0$
 Fact~\ref{f2} applies to yield $x_\omega+e_\infty \sim y_\omega+f_\infty$.

Now
 $ g+e_{n+1}+x_\omega \geq e_{n+1} + g_n +x_n=e_n$   
and by induction it follows $g+ e_n+x_\omega=e$ for all $n$.
Thus, by $\aleph_0$-continuity one has $g+x_\omega+e_\infty
= \prod_n (g+x_\omega+e_n) =e$. Similarly,  one obtains
$g+y_\omega+f_\infty=f$. Finally,
$e \sim f$ follows by Lemma~\ref{l2}.
\end{proof}

\section{Regular rings}\lab{reg}
\subsection{Preliminaries}
A  ring $R$ (associative and with unit) is (von Neumann)  \emph{regular} if for
each $a \in R$ there is a \emph{quasi-inverse}
or \emph{inner inverse} $x \in R$ such that $axa=a$;
equivalently, every right (left) 
principal ideal is generated by an idempotent, see Goodearl~\cite{good}
and Wehrung~\cite{fred}.
For a regular ring, $R$, the principal right ideals form a complemented sublattice $L(R)$
of the lattice of all right ideals; in particular, $L(R)$ is modular.
For artinian $R$, the height of $L(R)$ is the \emph{length} of $R$.

An element $a$ of $R$ is \emph{unit-regular} if there is  
a unit $u \in R$, a \emph{unit-quasi-inverse},  such that $aua=a$.
$R$ is \emph{unit-regular} if all its elements are unit-regular.
 Any such
ring is
\emph{directly finite} (that is $ab=1$ implies $ba=1$),
the converse not being true for regular rings, in general. 

If $e$ is an idempotent in a regular ring $R$,
then the \emph{corner} $eRe$ is a regular ring with unit $e$,
a homomorphic image of the regular subring
$eRe + (1-e)R(1-e)$ of  $R$.

Idempotents $e,f$ are Murray von Neumann \emph{equivalent} 
if $e=yx$ and $f=xy$ for some $x,y$.
For the following  see e.g. Handelman \cite{hand2} and Goodearl~\cite{good}..
\begin{itemize} 
\item[(1)] For any $a$ there is
a \emph{generalized} or \emph{reflexive inverse} $b$ such that $aba=a$ and $bab=b$,
e.g. $b=xax$ where $axa=a$. Then $ab$ and $ba$ are idempotents.
\item[(2)] $e,f$ are idempotents and  equivalent
if and only if $e=ba$ and $f=ab$ for some  $a,b$ as in (1).  Moreover, in this case
 $\omega_{a,b}(r)=ar$ defines an isomorphism
$\omega_{a,b}: bR=eR \to aR=fR$  of right $R$-modules 
with inverse $\omega_{b,a}$.
It follows that $fae=a$ and $ebf=b$.
\item[(3)] For idempotents $e,f$, 
 every $R$-module isomorphism $\omega:eR \to fR$
 is as in (2) where   $\omega(e)=a$ and $\omega^{-1}(f)=b$.
\item[(4)] If $eR \sim fR$ in $L(R)$ then $eR\cong fR$ as right $R$-modules.
If $eR \cap fR=0$ then the converse holds, too.
\item[(5)] If $c$ is another generalized inverse of $a$ 
then $bR \sim cR$ (being complements of $\{x \in R\mid ax=0\}$) 
and $x \mapsto cax$ is an isomorphism of $bR$ onto $cR$.
\end{itemize}
To prove (3), put $a=\omega(e)$ and $b=\omega^{-1}(f)$. 
Then one has  $\omega(r)= \omega(er) 
=\omega(e)r =ar$ for $r \in eR$ and, similarly, $\omega^{-1}(s)=bs$ for
$s \in fR$. 
Thus, $aba= \omega(ba)= \omega(\omega^{-1}(a))=a$ and, similarly,
$bab=b$.

A regular ring $R$ is {\em perspective} if
isomorphic direct summands of $R_R$ are perspective
in $L(R)$; equivalently,
 $aR$ is perspective to $bR$ for all
$aR\cong bR$ - for a more general result see Mary \cite[Thm.3.1]{mary}.

\begin{thm}\lab{hand} {\rm Handelman} \cite{hand2} A  regular
ring is unit-regular  if and only if it is perspective.
\end{thm}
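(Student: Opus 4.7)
I plan to prove the two implications separately, using throughout the dictionary (1)--(3) in the Preliminaries between right $R$-module isomorphisms of principal right ideals and pairs of reflexive quasi-inverses.

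For the direction from perspectivity to unit-regularity, I start with $a \in R$ and pick a reflexive quasi-inverse $b$ of $a$ (so $aba=a$ and $bab=b$); then $e=ba$ and $f=ab$ are idempotents with $eR=bR\cong aR=fR$ by (2). The perspectivity hypothesis supplies an idempotent $c$ with $R=eR\oplus cR = fR\oplus cR$. I then define the right $R$-module automorphism $\Phi$ of $R$ that is $\omega_{b,a}$ on $fR$ and the identity on $cR$, and write $\Phi=\lambda_u$ for the unit $u=\Phi(1)$. Because $a=fa\in fR$, one reads off $ua=\omega_{b,a}(a)=ba=e$, whence $aua=ae=aba=a$, so $u$ is a unit-quasi-inverse of $a$.

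For the converse direction, assume $R$ is unit-regular and fix idempotents $e,f$ with an isomorphism $\omega\colon eR\to fR$. I would invoke Ehrlich's equivalent characterization of unit-regularity (recalled in the introduction): $eR\cong fR$ forces $(1-e)R\cong(1-f)R$. Combining $\omega$ with any iso $\psi\colon(1-e)R\to(1-f)R$ yields a right $R$-module automorphism $\Phi=\lambda_u$ of $R$, where the unit $u$ realizes both summand isomorphisms simultaneously, giving $ueR=fR$ and $u(1-e)R=(1-f)R$. The remaining step is to extract from this unit a common complement of $eR$ and $fR$, for instance via the mixed idempotent $g=e+(1-e)ue$ (which one checks is idempotent by direct expansion, using $e(1-e)=0$) and arguing that $gR$ is simultaneously a complement of $eR$ and of $fR$.

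The main obstacle is this last step. Although the Peirce decompositions $R=eR\oplus(1-e)R$ and $R=fR\oplus(1-f)R$ are both carried by $u$, they are generally skew, so the direct-sum identities $eR\oplus gR=R=fR\oplus gR$ do not follow from $u$'s being a unit alone; the verification hinges on a key sublemma of the form $(1-e)uer\in eR\Rightarrow er=0$, which needs the compatibility $ueR=fR$ exploited carefully rather than invoked formally. An alternative route through the paper's own framework would be to check admissibility (A1)--(A3) for the system of module-induced partial isomorphisms in $L(R)$ (routine from (2) and (4)) and then apply Theorem~\ref{main}; but unit-regularity alone does not evidently force the reduction sequence $g_n$ to stabilize, so this path also requires substantial additional input beyond what the preliminaries supply.
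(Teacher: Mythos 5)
The paper offers no proof of this statement --- it is quoted from Handelman \cite{hand2} --- so there is nothing internal to compare against; I therefore assess your argument on its own. Your first direction (perspective $\Rightarrow$ unit-regular) is correct and complete: with $e=ba$, $f=ab$, a common complement $cR$ of $eR$ and $fR$, and the automorphism of $R_R$ obtained by patching $\omega_{b,a}\colon fR\to eR$ with the identity on $cR$, property (3) yields a unit $u$ with $ua=ba=e$, hence $aua=ae=a$. This is exactly the mechanism the paper itself uses in Lemma~\ref{ext} ((ii)$\Rightarrow$(iii)), simplified because the common complement is handed to you by hypothesis.

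The converse direction contains a genuine gap, which you have located but not closed. A unit $u$ with $ueR=fR$ and $u(1-e)R=(1-f)R$ does not by itself yield a common complement --- that is precisely the content of the theorem, since in a non-unit-regular ring one can have $fR=ueR$ for a unit $u$ without perspectivity. Your candidate $g=e+(1-e)ue$ is indeed idempotent, but it is aimed at the wrong target: one checks $gR\oplus(1-e)R=R$ and $gR\cap eR=eR\cap u^{-1}(eR)$, so $gR$ is a complement of $(1-e)R$ (hence perspective to $eR$), not a complement of $eR$, and there is no reason for it to complement $fR$. The standard repair uses machinery already in the paper: set $G=eR\cap fR$ and choose $A',B'$ with $eR=A'\oplus G$ and $fR=B'\oplus G$; then $A'\cap B'\leq G$ forces $A'\cap B'=0$, and by Fact~\ref{f3a} it suffices to prove $A'\sim B'$, which by (4) reduces to $A'\cong B'$. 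That isomorphism is $A'\oplus G\cong B'\oplus G$ with $G$ cancelled, and this cancellation is the substantive input from unit-regularity (Goodearl, \emph{Von Neumann Regular Rings}, Thm.~4.14; alternatively Ehrlich's condition applied inside the corner $hRh$ with $hR=eR+fR$, corners of unit-regular rings being unit-regular). The version of Ehrlich's theorem you quote cancels the complement in $R$ rather than the summand $G$, so it does not apply directly. Your fallback via Theorem~\ref{main} also fails as you suspect: unit-regularity gives no a priori stabilization of the chain $g_n$, as Example (i) of the paper shows explicitly.
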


An element $a$ of $R$ is \emph{strongly} $\pi$-\emph{regular}
if  there is $n$ such that $a^n \in a^{n+1}R\cap Ra^{n+1}$.
$R$ is \emph{strongly} $\pi$-\emph{regular} if so are all its elements.
\begin{thm}\lab{good3} {\rm Goodearl and Menal} \cite[Thm. 5.8]{good3}.
Strongly $\pi$-regular regular rings are unit-regular.
\end{thm}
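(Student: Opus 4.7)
The plan is to combine Theorem~\ref{hand} with Theorem~\ref{main}, applied to the admissible system $A$ of lattice isomorphisms of $L(R)$ induced by $R$-module isomorphisms between principal right ideals. By Theorem~\ref{hand} it suffices to show that $eR\sim fR$ in $L(R)$ whenever $e,f$ are idempotents of $R$ with $eR\cong fR$.

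Write such an isomorphism as $\omega_{a,b}\colon eR\to fR$, $r\mapsto ar$, with $e=ba$ and $f=ab$ as in~(2) of Section~\ref{reg}, and let $\alpha\colon[0,eR]\to[0,fR]$, $xR\mapsto axR$, be the induced lattice isomorphism. I would first verify that $A$ is admissible: (A1) amounts to noting that the restriction of an $R$-module isomorphism to a submodule is again of the same form; (A2) is exactly~(4) of Section~\ref{reg}; and (A3) holds because $\alpha^{\#}$ acts as multiplication by $a^{2}$ on the principal right ideals $\alpha^{-1}(g)$ and $\alpha(g)$, which by~(3) is again an $R$-module isomorphism and so lies in~$A$. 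With this, Theorem~\ref{main} reduces the problem to showing that $g_{m+1}=g_{m}$ (equivalently, $\alpha_{m}(g_{m})=g_{m}$) for some $m$.

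Unwinding the construction, $\alpha_{n}$ is multiplication by $a^{2^{n}}$ (restricted to the current domain), so the descending chains $e_{n}$ and $f_{n}$ are formed by iterated preimages under higher and higher powers of $a$ and $b$. The hypothesis $a^{n}\in a^{n+1}R\cap Ra^{n+1}$ yields, via the standard Fitting/Azumaya argument, an integer $N$ and an idempotent $h$ commuting with $a$ such that $a$ is invertible in the corner $hRh$ while $((1-h)a)^{N}=0$: on the invertible part one has $\alpha(g)=g$ already after one application of $\alpha^{\#}$, and on the nilpotent part a bounded power of $\alpha^{\#}$ trivializes the intersection; combining the two via the neutrality of $h$ in the relevant sublattice delivers $g_{m+1}=g_{m}$ for some $m$. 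Theorem~\ref{main} then gives $eR\sim fR$, and Theorem~\ref{hand} yields unit-regularity of $R$. The main obstacle is precisely this final link: converting strong $\pi$-regularity, a statement about chains of powers of a single element, into stabilization of the abstract reduction chain $(g_{n})$ in $L(R)$, which I expect requires careful bookkeeping to align the Fitting decomposition of $a$ with the decomposition of the interval $[0,eR]\cap[0,fR]$ furnished by the reduction process.
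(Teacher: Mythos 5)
The first thing to note is that the paper does not prove this statement at all: it is imported verbatim from Goodearl and Menal (with Khurana's paper cited for detailed proofs) and is used in Theorem~\ref{23} precisely as an ingredient \emph{independent} of the reduction machinery of Theorem~\ref{main} --- it is what makes condition (8) there a genuinely different route to perspectivity than condition (7). So you are attempting a derivation the author does not claim, and your attempt breaks down exactly where you yourself flag ``the main obstacle.''

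The gap is this. The Azumaya--Fitting decomposition gives an idempotent $h$ commuting with $a$ such that $ah$ is a unit of $hRh$ and $a(1-h)$ is nilpotent; but $h$ has no reason to commute with $b$, nor with $e=ba$ or $f=ab$. The reduction chain involves $b$ on an equal footing with $a$: one has $e_{n+1}R=b^{2^n}g_nR$ while $f_{n+1}R=a^{2^n}g_nR$. Hence the decomposition $R=hR\oplus(1-h)R$ does not visibly split the intervals $[0,e_nR]$, $[0,f_nR]$ or the chain $(g_nR)$, and the phrase ``neutrality of $h$ in the relevant sublattice'' is not a step of the proof but a restatement of everything that remains to be proved. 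What is cheap is that $g_{n+1}R\subseteq a^{2^n}R\cap b^{2^n}R$ and that $a^{2^n}R$ stabilizes to $hR$; this yields neither $g_{m+1}=g_m$ nor the weaker hypothesis $e_mR\sim f_mR$ that Theorem~\ref{main} actually needs. Note also that stabilization of $(g_n)$ is a property of the \emph{pair} $(a,b)$, not of $a$ alone: the paper's Example (i) exhibits reflexive inverses $b$ and $c$ of the same $a$ for which the chain never stabilizes, respectively stops at step $0$. So any correct argument along your lines must genuinely engage the interaction of the Fitting data of $a$ with $b$ (or abandon stabilization of $(g_n)$ and verify $e_mR\sim f_mR$ directly); as written, the proposal establishes neither, and the theorem should simply be taken from the literature as the paper does.
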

In general rings, a strongly $\pi$-regular element is unit-regular
provided all its  powers are regular. For a detailed discussion and
proofs see Khurana~\cite{khu}

\subsection{Perspectivity}

The following ``local version''
of Handelman's Theorem should be well known.

\begin{lem}\lab{ext}
Given a generalized inverse $b$ of  $a$ in a regular ring $R$
one has idempotents  $e=ba$, $f=ab$,
and $g$ such that $gR=eR\cap fR$.
Now,  the following are equivalent.
\begin{itemize}
\item[(i)]
$fR$ and $eR$ are perspective in $L(R)$.
\item[(ii)] For some (all) idempotents $e',f'$ such that
$e'R\oplus gR=eR$ and $f'R\oplus gR=fR$  one has $e'R\cong f'R$. 
\item[(iii)] For some (all) idempotents $e',f'$ such that
$e'R\oplus gR=eR$ and $f'R\oplus gR=fR$ 
there is a 
unit $u$ of $R$ such that $aua=a$ and $e'R \cong f'R$ via $ue'$. 
\end{itemize}
 \end{lem}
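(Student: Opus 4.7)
For (i) $\Leftrightarrow$ (ii), I would apply Fact~\ref{f3a} in $L(R)$ with $eR$ and $fR$ (whose meet is $gR$): perspectivity $eR \sim fR$ is equivalent to $e'R \sim f'R$ for some (equivalently all) $e', f'$ with $e'R \oplus gR = eR$ and $f'R \oplus gR = fR$. Since $e'R \cap f'R \subseteq e'R \cap gR = 0$, fact~(4) converts this to $e'R \cong f'R$ as right $R$-modules, giving (i) $\Leftrightarrow$ (ii), with the ``some/all'' equivalence inherited from Fact~\ref{f3a}. Direction (iii) $\Rightarrow$ (ii) is immediate by taking the isomorphism $r \mapsto ue'r$ provided.

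The main work is (ii) $\Rightarrow$ (iii). I first use fact~(3) to pick $\alpha \in f'R$ with $\alpha e' = \alpha$ and $\beta \in e'R$ with $\beta f' = \beta$ satisfying $\alpha\beta = f'$ and $\beta\alpha = e'$, so that $\omega_{\alpha,\beta}: e'R \to f'R,\ r \mapsto \alpha r$ is an isomorphism with inverse $s \mapsto \beta s$. The structural key is that the right ideal $M := fR + e'R$ admits two natural direct-sum decompositions,
\[
M \;=\; fR \oplus e'R \;=\; eR \oplus f'R \;=\; e'R \oplus gR \oplus f'R,
\]
verified via $e'R \cap fR \subseteq e'R \cap gR = 0$ and $f'R \cap eR \subseteq f'R \cap gR = 0$, together with the identity $fR + e'R = f'R + gR + e'R = f'R + eR$.

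Using the first decomposition $M = fR \oplus e'R$, I define an $R$-module endomorphism $\Psi$ of $M$ by $\omega_{b,a}$ on $fR$ and $\omega_{\alpha,\beta}$ on $e'R$; its image is $eR + f'R$, which by the second decomposition equals $M$, so $\Psi$ is an automorphism of $M$. Since $M$ is a finitely generated right ideal in the regular ring $R$, it is a direct summand, so $R = M \oplus M'$ for some $M'$; extending $\Psi$ by the identity on $M'$ yields an $R$-module automorphism of $R$, i.e. left multiplication by a unit $u \in R$. Since $f \in fR \subseteq M$, one has $uf = bf = b$ (using $bf = b$), hence $ua = (uf)a = ba = e$ and $aua = ae = a$. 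Since $e' \in e'R \subseteq M$, one has $ue' = \alpha e' = \alpha$, hence $ue'R = \alpha R = f'R$ (since $f' = \alpha\beta \in \alpha R$), and the isomorphism $e'R \cong f'R$ is realized as $r \mapsto ue'r$. I expect the main obstacle to lie in establishing the double direct-sum structure of $M$, which is the structural pivot of the argument; once it is in place, the unit $u$ is essentially forced and both ring-theoretic conditions of (iii) fall out simultaneously.
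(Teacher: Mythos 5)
Your proposal is correct and follows essentially the same route as the paper: the paper also reduces (i)$\Leftrightarrow$(ii) to Fact~\ref{f3a} together with (4), and for (ii)$\Rightarrow$(iii) it builds the automorphism piecewise as $\omega_{b,a}$ on $fR$ and the given isomorphism on $e'R$, extended by the identity on $(1-h)R$ where $hR=eR+fR$ --- which is exactly your $M=fR+e'R=fR\oplus e'R=eR\oplus f'R$ together with its complement. The only cosmetic difference is that you verify the ``via $ue'$'' clause explicitly through $ue'=\alpha$, which the paper leaves implicit.
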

\begin{proof}
In view of (2) in the preceding subsection,   $e=ba$ and $f=ab$
are idempotents. 
Consider $e',f'$ as in (ii) and (iii) and observe that such exist
since $L(R)$ is complemented.

  By Fact~\ref{f3a} we have $e'R \cap f'R=0$
and, moreover,  $eR \sim fR$ if and only if $e'R\sim f'R$.
By (4), the latter is equivalent to $e'R \cong f'R$. 
This proves that (i) is equivalent to (ii).

Now, assume (ii), in particular 
$f'R \cong e'R$  via some isomorphism $\omega'$.
Choose an idempotent $h$ such that $eR+fR=hR$.
Then \[eR\oplus f'R \oplus(1-h)R=R =fR\oplus e'R\oplus (1-h)R,\] again by Fact~\ref{f3a}.
In view of (2) define 
\[\omega(r+s+t)= \omega_{b,a}(r)+ \omega'(s)+ t \mbox{ for } r \in fR,
s \in e'R  \mbox{ and } t \in (1-h)R\]  
to obtain an automorphism of the right $R$-module $R=1R$.
 By (3) there are $u,v$ in $R$ such that $\omega=\omega_{u,v}$;
in particular,  $u$ is a unit and $v=u^{-1}$. Moreover, 
$ur= \omega_{b.a}(r) =br $ for $r \in fR$, in particular $ua=ba =e$.
Thus $aua=ae =a$, proving that (ii) implies (iii).

Finally, assume (iii). Then $u^{-1}f'=e'$ whence $x \mapsto
ux$ is an $R$-module  isomorphism of $e'R$ onto $f'R$ with inverse
$y \mapsto u^{-1}y$.  Thus, (ii) and (iii) are equivalent, too.
\end{proof}

For $A,B \in L(R)$ and right module isomorphism 
$\omega:A\to B$ one has the 
induced lattice isomorphism $\omega_L:[0,A]\to [0,B]$.
Let  $A(R)$ denote the set of all these. 
\begin{lem}\lab{ad}
$A(R)$ is an admissible system of partial isomorphisms of $L(R)$. 
\end{lem}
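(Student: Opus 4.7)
The plan is to verify the three axioms (A1)--(A3) in turn, in each case translating a lattice-theoretic construction on $\alpha=\omega_L$ into the corresponding module-theoretic construction on the underlying module isomorphism $\omega$. The common ingredient is that, because $\omega$ is $R$-linear, the restriction of $\omega$ to any principal right ideal $A'\leq A$ is again a module isomorphism with principal image: if $A'=e'R$ for an idempotent $e'$, then $\omega(A')=\omega(e')R\in L(R)$.

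For (A1), given $A'\leq A$ in $L(R)$ I simply take $\omega':=\omega_{|A'}\colon A'\to \omega(A')$. By the preceding remark $\omega'$ is a right $R$-module isomorphism between principal right ideals, and by construction $(\omega')_L=(\omega_L)_{|A'}=\alpha_{|A'}$, so $\alpha_{|A'}\in A(R)$.

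For (A2), suppose $A\cap B=0$. Then $\omega\colon A\to B$ is a right $R$-module isomorphism with $A\cap B=0$, which is exactly the hypothesis of the converse half of item~(4) of the preliminaries; hence $A\sim B$ in $L(R)$, and together with $A\cap B=0$ this yields $A\approx B$ as required.

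For (A3), set $G=A\cap B\in L(R)$ and consider the two restrictions $\omega_1:=\omega_{|\omega^{-1}(G)}\colon \omega^{-1}(G)\to G$ and $\omega_2:=\omega_{|G}\colon G\to \omega(G)$, both module isomorphisms between principal right ideals. Their composition $\omega^{\#}:=\omega_2\circ\omega_1\colon \omega^{-1}(G)\to \omega(G)$ satisfies $\omega^{\#}(x)=\omega(\omega(x))=\omega^2(x)$ on elements, so the induced lattice map $(\omega^{\#})_L$ coincides with $\alpha^{\#}$, placing $\alpha^{\#}$ in $A(R)$. No real obstacle arises: each axiom reduces to the observation that a restriction of $\omega$ produces a module isomorphism whose induced lattice map coincides with the formal lattice construction, with item~(4) of the preliminaries entering only in (A2).
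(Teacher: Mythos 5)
Your proof is correct and follows essentially the same route as the paper: axiom (A1) via restriction of the module isomorphism, (A3) via the composition $\omega_{|G}\circ\omega_{|\omega^{-1}(G)}$ inducing $\alpha^{\#}$, and (A2) via the converse part of item (4) of the preliminaries. The only addition is your explicit remark that $\omega(e'R)=\omega(e')R$ is again principal, which the paper leaves implicit.
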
     
 \begin{proof}
  Consider $\omega:A \to B$ in $A(R)$ and observe that $\omega_L(X)=
  \omega(X)$ for all $X \leq A$. Thus,
  if $A' \leq A$ in $L(R)$ then $(\omega_{|A'})_L:[0,A']\to
  [0,B']$ in $A(R)$ with $B'=\omega(A')\leq B$, proving axiom (A1).
 Similarly, for $C=A\cap B$, $A'=\omega^{-1}(C)$, and $B'=\omega(C)$
 one has $\omega_L^\# =(\omega_{|C} \circ \omega_{|A'})_L$ in $A(R)$,
 proving axiom (A3).
Finally,
 (A2) follows from (4).
\end{proof}

As observed by the referee, the following can be obtained
in purely  ring theoretic terms: it is an immediate
consequence of Lemma~\ref{ext} and \cite[Proposition 4.13]{good}.

\begin{cor}
For  $A,B$  in
the lattice $L(R)$ of principal right ideals of the regular ring $R$, if  $A\cap B$ is of finite height in $L(R)$
 then
$A,B$  are  perspective in $L(R)$ if and only if they are isomorphic
as $R$-modules. 
\end{cor}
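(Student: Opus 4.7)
The plan is to view this corollary as a direct synthesis of the two strands the paper has built: the lattice-theoretic reduction process (culminating in Theorem~\ref{main}) and the fact that module isomorphisms of principal right ideals induce admissible partial isomorphisms in $L(R)$ (Lemma~\ref{ad}). One direction is essentially given for free by item~(4) of the preliminaries; the only real content is in the other direction, and there the finite-height hypothesis is tailor-made for Theorem~\ref{main}.

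For the \emph{only if} direction I would simply cite~(4): if $A \sim B$ in $L(R)$ then $A \cong B$ as right $R$-modules, with no hypothesis on $A \cap B$ needed. This disposes of one implication in a single line.

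For the \emph{if} direction, suppose $\omega: A \to B$ is an $R$-module isomorphism. Since $R$ is regular, $A = eR$ and $B = fR$ for idempotents $e, f$, and $\omega$ induces a lattice isomorphism $\omega_L : [0, A] \to [0, B]$. By Lemma~\ref{ad}, $\omega_L$ lies in the admissible system $A(R)$ of partial isomorphisms of $L(R)$. The hypothesis that $A \cap B$ has finite height in $L(R)$ is precisely the last clause of Theorem~\ref{main}, so that theorem applies to $\omega_L$ and yields $A \sim B$ in $L(R)$.

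The step I would expect to need the most care is checking that nothing has been elided in moving between ring-level and lattice-level data: that the $\omega_L$ produced from a module isomorphism is actually the kind of partial isomorphism to which Theorem~\ref{main} applies, and that the ``$e \cap f$ of finite height'' hypothesis of the theorem translates correctly to ``$A \cap B$ of finite height'' in the corollary. Both points, however, are already settled — the first by Lemma~\ref{ad}, the second by the definition of $L(R)$ as a sublattice of the lattice of right ideals — so the proof should reduce to a short paragraph invoking Lemma~\ref{ad} and Theorem~\ref{main}, with (4) handling the reverse implication.
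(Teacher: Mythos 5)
Your proposal is correct and follows exactly the paper's own argument: the module isomorphism induces a lattice isomorphism in the admissible system $A(R)$ of Lemma~\ref{ad}, the finite-height clause of Theorem~\ref{main} gives perspectivity, and item (4) of the preliminaries handles the converse. Nothing is missing.
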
  
\begin{proof}
Assume $A\cap B$ of finite height.
If $A\cong B$ then there is a lattice
isomorphism $\alpha:[0,A]\to [0,B]$ in $A(R)$
and  in view  of Lemma~\ref{ad} and Theorem~\ref{main}
it follows that $A\sim B$. The converse follows from (4).
\end{proof}

\begin{cor} An element  $a$ in a regular ring $R$
is unit-regular provided
that there is a reflexive inverse $b$
of $a$ such that $bR \cap aR$ is of finite height in $L(R)$.
\end{cor}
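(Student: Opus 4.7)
The plan is to chain together the preceding \emph{ext} Lemma (Lemma~\ref{ext}) with the first Corollary (on finite-height intersection forcing perspectivity), observing that a reflexive inverse produces exactly the data these results need.

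First I would identify the two principal right ideals. From $aba=a$ and $bab=b$ one has $a \in abR$ and $ab \in aR$, so $aR = abR = fR$ for the idempotent $f=ab$; symmetrically $bR = baR = eR$ for the idempotent $e=ba$. Thus the hypothesis ``$bR\cap aR$ is of finite height in $L(R)$'' translates directly into $eR\cap fR = gR$ having finite height, where $g$ is chosen as in Lemma~\ref{ext}.

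Next I would invoke property~(2) of Section~\ref{reg}: the reflexive inverse $b$ provides the $R$-module isomorphism $\omega_{a,b}:bR \to aR$, i.e.\ $eR\cong fR$. With $eR \cap fR$ of finite height and $eR\cong fR$ as right $R$-modules, the first Corollary applies and yields that $eR$ and $fR$ are perspective in $L(R)$; equivalently, $fR$ and $eR$ are perspective, which is condition~(i) of Lemma~\ref{ext}.

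Finally, Lemma~\ref{ext} says (i)$\Rightarrow$(iii): there exists a unit $u\in R$ with $aua=a$. By definition this is unit-regularity of $a$, completing the proof. There is essentially no obstacle here beyond assembling the correct translation between the ring-theoretic data $(a,b)$ and the lattice-theoretic input $(eR,fR,gR)$ of the two preceding results; the main work (the reduction process and its termination under the finite-height assumption) has already been carried out in Theorem~\ref{main} and packaged into the first Corollary.
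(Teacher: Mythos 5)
Your proof is correct and follows exactly the route the paper intends: the paper states this corollary without proof precisely because it is the assembly of the preceding corollary (finite height of $A\cap B$ plus $A\cong B$ gives $A\sim B$) with the implication (i)$\Rightarrow$(iii) of Lemma~\ref{ext}, using $e=ba$, $f=ab$, $eR=bR$, $fR=aR$, and the isomorphism $\omega_{a,b}$ from item (2). All the translations you make between the ring-theoretic and lattice-theoretic data are accurate.
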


\subsection{Regular rings with operation of  quasi-inversion}\label{var}
A regular ring  may be considered as an algebraic structure also 
endowed with an operation $a \mapsto a'$ of quasi-inversion.
The class $\mathcal{R}$ of all these structures is then defined
by the identities  for rings with unit together with 
$xx'x=x$. 
 As observed above, the term $x^+=x'xx'$ 
then yields a generalized inverse $a^+$ of $a$ and $\gamma(x)=x x^+$
yields idempotents $\gamma(a)$ such that $\gamma(a)R=aR$.
Though, in this setting, regular subalgebras (these algebras
endowed with an operation of  inner inversion)
 of unit-regular rings 
may fail to be unit-regular as shown by Bergman~\cite{berg}.
As remarked by the referee, 
unit-regularity can be equationally defined adding the 
identities $x'(x')'=1$ and $(x')'x'=1$.
See Ara, Goodearl, Nielsen. Pardo, and Perera~\cite{ara}
for a comprehensive introduction.

For the following see Wehrung~\cite[Lemma 8-3.12]{fred}.

\begin{lem}\lab{fred}
There are binary  terms
$x \vee y$, 
 $x \wedge y$, and $x\ominus y$ in the language of
$\mathcal{R}$ such that, for all $R\in \mathcal{R}$ and $a,b\in R$, 
$a \vee b$,
 $a\wedge b$, and $a \ominus b$ are
idempotent,
$(a\vee b)R=aR+bR$, 
$(a \wedge b)R=aR  \cap bR$, and $(a\ominus (a\wedge b))R\oplus (a\wedge b)R=aR$.\end{lem}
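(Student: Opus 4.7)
The plan is to build each of the three terms explicitly from the quasi-inverse operation. From Section~\ref{reg} recall that $x^+ := x'xx'$ is always a reflexive inverse of $x$, so $\gamma(x) := xx^+$ is a term yielding an idempotent with $\gamma(x)R = xR$ and $\gamma(x)\cdot x = x$. Throughout the construction, abbreviate $e := \gamma(a)$, $f := \gamma(b)$, and $h := (1-e)f$; observe the identity $hf = (1-e)f^2 = h$, which will drive the meet calculation.

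For the join, put $k := \gamma(h)$ and define $a \vee b := e + k - ke$. Since $h \in (1-e)R$ we have $k \in (1-e)R$, hence $ek = 0$; a brief expansion using $e^2 = e$, $k^2 = k$, and $ek = 0$ then yields $(a \vee b)^2 = a \vee b$. The identities $(a \vee b)e = e$ and $(a \vee b)k = k$ give $eR + kR \subseteq (a\vee b)R$, and $a \vee b \in eR + kR$ is immediate; finally $eR + kR = eR + hR = aR + bR$, the last equality because $f = ef + h \in eR + hR$.

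For the meet, define $a \wedge b := f(1 - h'h)$. Since $hh'h = h$, the element $h'h$ is idempotent, so $(1-h'h)R$ is the right annihilator of $h$. The identity $hf = h$ gives $h'hf = h'h$, from which $(1-h'h)f = f - h'h$ and a short expansion yield $(a\wedge b)^2 = a \wedge b$. For the right ideal, any $y \in eR\cap fR$ has the form $y = fs$ with $(1-e)fs = hs = 0$, so $s \in (1-h'h)R$ and $y \in f(1-h'h)R$; conversely $f(1-h'h)r \in fR$, and $(1-e)f(1-h'h)r = h(1-h'h)r = 0$ shows it also lies in $eR$.

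For the difference, define $a \ominus b := \gamma(a) - (a \wedge b)\gamma(a)$. Writing $m := a \wedge b$, the meet construction gives $em = m$ and $m^2 = m$; hence $eme = me$ and $(me)^2 = m(em)e = m^2e = me$, so $(e - me)^2 = e - 2me + me = e - me$, showing $a \ominus b$ is always idempotent. Substituting $b \mapsto a \wedge b$: the meet formula applied to $(a,m)$ collapses (since $(1-e)\gamma(m) = 0$ makes the auxiliary $h$-term vanish) to $n := \gamma(m)$, an idempotent generating the same right ideal as $m$ and satisfying $en = n$. Then $a \ominus (a \wedge b) = e - ne$, and the identities $(e - ne) + ne = e$ and $n(e - ne) = ne - n^2e = 0$, together with the observation that any $x \in (e - ne)R \cap nR$ satisfies $x = nx = 0$, give $(e - ne)R \oplus nR = eR = aR$; since $nR = mR = (a \wedge b)R$, this is the required direct sum. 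The main obstacle to watch is the left/right asymmetry $em = m$ versus $me \ne m$, which forces the corrected form $e - me$ in place of the naive $e - m$, and the related subtlety that $a \wedge (a\wedge b)$ coincides with $a \wedge b$ only as a right-ideal generator, not as a ring element, so the final direct-sum verification must be carried out with $n = \gamma(m)$ rather than $m$ itself.
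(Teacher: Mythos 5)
Your construction is correct: all three terms lie in the language of $\mathcal{R}$, the idempotency computations go through (including the general case $(e-me)^2=e-me$ via $em=m$ and $m^2=m$), the right-ideal identities $(a\vee b)R=aR+bR$ and $(a\wedge b)R=aR\cap bR$ follow exactly as you argue from $ek=0$ and from $(1-h'h)R$ being the right annihilator of $h$, and you correctly handle the one genuine subtlety in the $\ominus$-clause, namely that substituting $b\mapsto a\wedge b$ makes the inner meet collapse to $\gamma(a\wedge b)$ rather than to $a\wedge b$ itself, which still generates the same right ideal and so yields the required decomposition $(e-ne)R\oplus nR=eR=aR$. Note that the paper gives no proof of this lemma at all --- it only cites Wehrung \cite[Lemma 8-3.12]{fred} --- so your explicit construction (join via $e+\gamma((1-e)f)(1-e)$, meet via $f(1-h'h)$ with $h=(1-e)f$, difference via $e-(a\wedge b)e$) is a self-contained substitute for that citation, following the standard route such a proof would take.
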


\begin{thm}\lab{mainr} For each natural number $n$ there are
binary terms $t_n(x,y)$, $u_n(x,y)$, and $p_n(x,y)$ in the language of $\mathcal{R}$
such that the following hold for all $R \in \mathcal{R}$ and mutually 
reflexive inverses $a,b \in R$:
$t_n(a,b)$ is idempotent;  moreover, if $t_{n+1}(a,b)t_n(a,b)=t_n(a,b)$ then
\begin{itemize}
\item[(i)]  $bR$ and $aR$    are
perspective in $L(R)$:   $bR \sim_{p_n(a,b)} aR$.
\item[(ii)]  $u_n(a,b)$ is a unit such that $au_n(a,b)a=a$.
\end{itemize} 
   If $R$ is of length at most $n+2$ then 
    $t_{n+1}(a,b)t_n(a,b) =t_n(a,b)$ 
for all mutually reflexive inverses $a,b$  in  $R$.
\end{thm}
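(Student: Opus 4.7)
The plan is to prove the theorem by induction on $n$, by translating the lattice-theoretic reduction process of Subsection~\ref{perp} into the language of $\mathcal{R}$. Using the binary terms $\vee, \wedge, \ominus$ from Lemma~\ref{fred} together with the quasi-inversion operation, I would inductively construct, for mutually reflexive inverses $a, b \in R$, a sequence of pairs $a_n, b_n$ of mutually reflexive inverses realizing the isomorphisms $\alpha_n \colon e_n \to f_n$ of Subsection~\ref{perp}: setting $a_0 = a, b_0 = b$, the idempotents $e_n = b_n a_n$, $f_n = a_n b_n$ generate $e_n R$, $f_n R$, and $\alpha_n$ is the member of $A(R)$ (cf.~Lemma~\ref{ad}) induced by left multiplication by $a_n$; at stage $n+1$, define $b_{n+1}, a_{n+1}$ as reflexive inverses whose right ideals equal $\alpha_n^{-1}(g_n R) = e_{n+1}R$ and $\alpha_n(g_n R) = f_{n+1}R$, each being a term-definable subideal via $\wedge$ and module multiplication. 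Set $t_n(a,b) := e_n \wedge f_n$, the idempotent generating $g_n R$. Since $g_{n+1} \leq g_n$ always, $t_{n+1} t_n = t_{n+1}$ holds automatically, so $t_{n+1} t_n = t_n$ is equivalent to $g_n R = g_{n+1} R$, i.e.\ $g_n = g_{n+1}$; by Observation~\ref{obs} this is exactly the termination condition $\alpha_n(g_n) = g_n$.

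Under this condition, Lemma~\ref{ind}(v) yields $e_0 \sim f_0$, i.e.\ $bR \sim aR$ in $L(R)$, and provides an explicit common complement. The term $p_n(a, b)$ is obtained by tracing the construction: in Lemma~\ref{ind} the complements $c_k$ witnessing $x_k \approx_{c_k} y_k$ come from axiom (A2) applied to $\alpha_k|_{x_k}$ (realized ring-theoretically via items (1)--(5) of Section~\ref{reg}), are combined via Fact~\ref{f5a}, and are then passed from $x \approx y$ to $x + g_0 \sim y + g_0 = e_0 \sim f_0$ using Lemma~\ref{l2}, whose proof again provides a term-definable common complement. The $x_k$ arise via $\ominus$, the $y_k = a_k x_k$ via ring multiplication, and all intermediate complements are captured by terms. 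For $u_n(a, b)$, I would follow the proof of Lemma~\ref{ext}(iii): given the perspectivity with common complement, pick $e', f'$ complementary to $g = t_0(a,b)$ in $bR, aR$ (via $\ominus$), assemble an automorphism of $R_R$ by combining $\omega_{b,a}$ on $aR$, an isomorphism $\omega' \colon e'R \to f'R$ extracted from the perspectivity data, and the identity on a complement of $bR + aR$, and invoke property (3) of Section~\ref{reg} to represent this automorphism as left multiplication by a term-definable unit $u_n$ with $a u_n a = a$.

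For the length bound, suppose $R$ has length $\ell \leq n+2$. If $e_0 = f_0$, then $g_0 = e_0$ and $\alpha_0(g_0) = g_0$, so termination already occurs at step $0$. Otherwise $e_0 \neq f_0$; since $e_0 R \cong f_0 R$, they have equal height in the finite-length lattice $L(R)$, so $e_0 \neq f_0$ forces both $g_0 < e_0$ and $e_0 + f_0 > e_0$ strictly. The former gives height$(g_0) \leq$ height$(e_0) - 1$, and the latter combined with height$(e_0 + f_0) \leq \ell$ gives height$(e_0) \leq \ell - 1$, whence height$(g_0) \leq \ell - 2 \leq n$. By Observation~\ref{obs} the chain $g_0 > g_1 > \cdots > g_m$ is strictly descending until termination at some step $m \leq $ height$(g_0) \leq n$, so $g_n = g_{n+1}$ as required. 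The main technical obstacle is the explicit, finite term-theoretic description of $p_n$ and $u_n$: it requires careful bookkeeping of every sub-complement, reflexive inverse, and module isomorphism arising in the inductive constructions of Lemma~\ref{l2}, Fact~\ref{f5a}, Lemma~\ref{ind}, and Lemma~\ref{ext}, and verifying that each appeal to axiom (A2) in $A(R)$ can be realized by applying items (1)--(5) of Section~\ref{reg} to the current terms $a_n, b_n$.
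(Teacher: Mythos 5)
Your reduction apparatus matches the paper's: the same terms $t_n$ with $t_n(a,b)R=g_nR$, the same identification of $t_{n+1}(a,b)t_n(a,b)=t_n(a,b)$ with $g_{n+1}R=g_nR$, perspectivity via Theorem~\ref{main} (or Lemma~\ref{ind}(v)), the unit via Lemma~\ref{ext}, and essentially the same height count for the length bound. (One small slip: $g_{n+1}R\subseteq g_nR$ gives $t_nt_{n+1}=t_{n+1}$ automatically, not $t_{n+1}t_n=t_{n+1}$; the equivalence you need still holds, since $t_{n+1}t_n=t_n$ says exactly $t_n\in g_{n+1}R$, i.e.\ $g_nR\subseteq g_{n+1}R$.)

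The genuine gap is the existence of the \emph{terms} $p_n(x,y)$ and $u_n(x,y)$. You propose to obtain them by ``tracing the construction'' through Lemma~\ref{l2}, Fact~\ref{f5a}, Lemma~\ref{ind} and Lemma~\ref{ext}, and you yourself flag this bookkeeping as ``the main technical obstacle'' without carrying it out. As it stands this is a plan, not a proof: every choice of complement, generalized inverse and module isomorphism occurring in those lemmas (including the subdirect decomposition inside Lemma~\ref{l2} and the automorphism of $R_R$ assembled in Lemma~\ref{ext}) would have to be shown term-definable, and none of that is done. The paper sidesteps the entire issue with a short universal-algebra argument: since $\mathcal{R}$ is a variety, the free algebra $F$ in $\mathcal{R}$ on generators $a,b$ subject to the relations $aba=a$, $bab=b$, $t_{n+1}(a,b)t_n(a,b)=t_n(a,b)$ exists and is itself a regular ring satisfying the hypothesis; hence $F$ contains a unit $u$ with $aua=a$ and an idempotent $p$ with $bF\sim_{pF}aF$. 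Every element of $F$ is the value of a term in $a,b$, so $u=u_n(a,b)$ and $p=p_n(a,b)$ for suitable terms, and the witnessing conditions (idempotency, $uv=vu=1$, $aua=a$, and the lattice relations for $\sim_{p}$ written via the terms $\vee$ and $\wedge$ of Lemma~\ref{fred} as equations between idempotents) are equations in $a,b$, hence preserved by the evaluation homomorphism $F\to R$ for any $R\in\mathcal{R}$ and mutually reflexive inverses $a,b\in R$ satisfying $t_{n+1}(a,b)t_n(a,b)=t_n(a,b)$. To complete your argument you must either carry out the explicit bookkeeping in full or adopt this free-algebra device.
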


\begin{proof} 
With idempotents $e_0=ba$ and $f_0=ab$ 
one has the isomorphism $\omega_{ab}:e_0R \to f_0$
given by $x  \mapsto ax$ inducing the
isomorphism  $\alpha=\alpha_0:[0,eR] \to [0,fR]$
given by $\alpha(xR)=axR$ with inverse $\alpha^{-1}(xR)= bxR$.
Recalling the construction in Subsection~\ref{part}
put $g_0= e_0 \wedge f_0$ and,
recursively, 
 \[g_n=e_n \wedge f_n,\,e_{n+1}=\gamma(b^{2^n}g_n),\;f_{n+1} =
\gamma(a^{2^n}g_n)\]
to obtain $\alpha_{n+1}:[0, e_{n+1}R] \to [0,f_{n+1}R]$
given by $\alpha_{n+1}(xR)= \alpha_n^2(xR)=a^{2^n}xR$.
Accordingly, put $t_0(x,y) = yx \wedge xxy$,
 and, inductively,
\[  t_{n+1}(x,y) =  y^{2^{n}} t_n(x,y) \wedge x^{2^{n}} t_n(x,y).  \]
Thus, for $a,b$ as above one has
$t_n(a,b)R= g_nR= e_nR\cap f_nR$ 
 whence
\[t_{n+1}(a,b)t_n(a,b) =t_n(a.b)\; \Leftrightarrow \; g_{n+1}R =g_nR.\]
 Thus,
supposing  $t_{n+1}(a,b)t_n(a,b) =t_n(a,b)$, 
  $bR$ and $aR$ are perspective in $L(R)$ by Theorem~\ref{main}
and  Lemma~\ref{ext} applies to provide the existence of a unit $u$ in $R$
such that $aua=a$ and idempotent $p \in R$ such that 
$bR\sim_{pR} aR$.
To prove the existence of  terms $u_n(x,y)$ and $p_n(x,y)$, as required,  it suffices 
to observe that all this  applies, in particular,
to $R$ being  the free algebra in  $\mathcal{R}$ with 
 generators $a,b$ and 
relations $aba=a$, $bab=b$, and $t_{n+1}(a,b) t_n(a,b)=t_n(a,b)$. 

Now,  assume that $g_k \neq g_{k+1}$ for all $k \leq m$.
Then one obtains a chain  $e_0R+f_0R> e_0R>g_0R> \ldots >
g_{m+1}R$  of length $m+3$ in $L(R)$.
Thus,
if $r$ is of length at most $n+2$ then  $g_m =g_{m+1}$ for some $m \leq n+2$ and it follows $g_k=g_m$
for all $k\geq m$, in particular $g_n=g_m=g_{n+1}$.
\end{proof}

\begin{ex}
\begin{itemize}
\item[(i)] There are
 $a,b,c$ in some  unit-regular ring $R$ such that
$a,b$ and $a,c$ are pairs of reflexive inverses,
$aR$, $bR$, and $cR$ pairwise perspective,
$t_{n+1}(a,b)t_n(a,b)\neq t_n(a,b)$ for all $n$, 
and $t_0(a,c)=0$.
\item[(ii)]
There are a regular ring $R$ and reflexive inverses $a,b$ 
in $R$ such that $t_0(a,b)=0$
but both $a$ and $b$ are not strongly $\pi$-regular,
\end{itemize}
\end{ex}
\begin{proof}
Considering (i) let $V$ a  vector space  of dimension $n+3$.
We show by induction that ${\sf End}(V)$
contains some $a,b$ with associated $g_n>g_{n+1}$.
More precisely, we show 
that for any  subspaces $V_1 \neq V_2$ 
of codimension $1$  
there is such $a$ with generalized inverse $a^+$
and  restricting to an isomorphism 
$V_1 \to V_2$ and such that
$V_1={\sf im }\, a^+$ and $V_2= {\sf im }\, a$. 
If  $n=0$ choose $v_i$ such that
$V_1\cap V_2={\sf span}\, v_3$ and $V_i= {\sf span}\,v_i +V_1\cap V_2$ 
for $i=1,2$. Define the endomorphism 
$a$ by $a(v_1)=v_2$, $a(v_2)=0$, and $a(v_1+v_3)=v_3$
and $a^+$ by $a^+(v_1)=0$, $a^+(v_2)=v_1$, and $a^+(v_3)=v_1+v_3$.
Proceeding from $n-1$ to $n$
 choose $W$ of codimension $1$ in $V$
such that $V_1 \cap V_2 \not\subseteq W$ and put $W_i=W\cap V_i$. 
Choose  endomorphisms $a_0,a_0^+$ of $W$
connecting $W_1$ and $W_2$ according to hypothesis.
Choose $v_3 \not\in W$ and $v_i$ such $V_i={\sf span}\,v_i +W_i$
for $i=1,2$  and extend $a_0$ and  $a_0^+$ 
to obtain $a$ and $a^+$, defined for $v_i$ as above.

By this construction there 
are finite-dimensional  $W_n=V_n\oplus U_n$
with mutally  reflexive inverses
$a_{0n},b_{0n}$ in $V_n$ such that $t_{n+1}(a_{0n},b_{0n}) 
t_n(a_{0n},b_{0n}) \neq t_n(a_{0n},b_{0n})$ 
 and isomorphism $c_{0n}:V_n\to U_n$.
Choose $a_n$ extending $a_{0n}$ and $c_{0n}^{-1}$,
and $b_n,c_n$ extending $b_{0n}$ and $c_{0n}$, respectively, such
that $b_n|U_n=0$ and $c_n|U_n=0$. 
Then the direct product of the ${\sf End}(W_n)$ provides 
$R$ and $a,b,c$ as required.

In (ii)
consider a vector space $V$ with basis $v_n,w_n (n \in \mathbb{N})$,
$R={\sf End}(V)$ and define $a(v_n)=w_n$, $a(w_n)=w_{n+1}$,
$b(w_n)=v_n$, and $b(v_n)=v_{n+1}$. 
\end{proof}

\subsection{(Existence) varieties of unit-regular rings }
Observe that subrings of regular rings are not regular, in general,
an obvious example being $\mathbb{Z} \subset \mathbb{Q}$.
Thus, to deal with classes $\mathcal{C}$ of regular rings
in the framework of Universal Algebra, without specifying
operations of quasi-inversion, it is convenient to introduce
the class operator ${\sf S}_\exists(\mathcal{C})$ 
  associating with $\mathcal{C}$ the class of all regular
rings which are subrings of members of $\mathcal{C}$.
Referring to the usual operators ${\sf H}$,
${\sf P}$, and ${\sf P}_u$ for homomorphic images, direct 
products and ultraproducts (which preserve regularity),
a class $\mathcal{V}$ of regular rings which is   closed under  
under ${\sf H}$, ${\sf S}_\exists$, and ${\sf P}$
(whence also ${\sf P}_u$) is an \emph{existence variety},
shortly $\exists$-\emph{variety} (cf. Hall \cite{hall} for this concept).
According to Herrmann and Semenova~\cite[Thm. 16]{hs}
every existence variety of regular rings is generated by its
 artinian members.

For a class $\mathcal{C}$ of regular rings 
 let ${\sf T}(\mathcal{C})$
consist of all regular rings endowed  with an operation of 
 inner inversion  (that is, members of $\mathcal{R}$ 
as defined in the previous subsection)
where the underlying  ring is in $\mathcal{C}$.
For a ring $R$ let $R^{n\times n}$ denote the ring of $n$-by-$n$-matrices.

\begin{fact}\lab{hs}\lab{hs2}
\begin{itemize}
\item[(i)]
The smallest existence variety ${\sf V}_\exists(\mathcal{C})$
containing $\mathcal{C}$ is ${\sf HS}_\exists{\sf P}(\mathcal{C})$.
\item[(ii)] $R \in  {\sf HS}_\exists{\sf P}_u(\mathcal{C})$
for every subdirectly irreducible $R\in {\sf V}_\exists(\mathcal{C})$.
\item[(iii)]  Any subdirectly irreducible regular ring $R$ is an $F$-algebra
for a suitable field $F$. Moreover, if such $R$ is nonartinian 
then $F^{n\times n} \in \sf{H}{\sf S}_\exists(R)$ for all $n<\omega$ and
${\sf V}_\exists(R) ={\sf V}_\exists\{F^{n\times n}\mid n<\omega\}$.
\item[(iv)]  Any identity in the
language of $\mathcal{R}$ which is valid in ${\sf T}(\mathcal{C})$
is also valid in  ${\sf TV}_\exists(\mathcal{C})$.
 \item[(v)] ${\sf TV}_\exists(\mc{C}) ={\sf VT}(\mc{C})$.
\end{itemize}
\end{fact}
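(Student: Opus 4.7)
The plan is to handle (i)--(v) in turn, using standard universal-algebraic closure arguments for existence varieties together with structural results on simple regular rings; parts (iv) and (v) I derive together at the end.

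For (i), I verify that ${\sf Q}={\sf HS}_\exists{\sf P}$ is a closure operator by checking the usual commutation inclusions ${\sf PS}_\exists\subseteq{\sf S}_\exists{\sf P}$ (a product of regular subrings of factors sits as a regular subring of the product), ${\sf PH}\subseteq{\sf HP}$ (componentwise surjections assemble into a surjection of products), together with the key inclusion ${\sf S}_\exists{\sf H}\subseteq{\sf HS}_\exists$. For this last, given regular $A$, ideal $I\subseteq A$, and a regular subring $B\subseteq A/I$, one lifts each element of $B$ to $A$ along with a prescribed inner inverse in $B$, iteratively closes by adjoining inner inverses in $A$ of the newly produced elements, and takes the directed union to obtain a regular subring $A_0\subseteq A$ with $A_0/(A_0\cap I)=B$. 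For (ii), I apply a Jónsson-style argument: given a subdirectly irreducible $R\cong A/J\in{\sf HS}_\exists{\sf P}(\mathcal{C})$, pull back the unique minimal nonzero congruence on $R$ to $A\subseteq\prod_{i\in I}C_i$, and apply the standard filter construction to produce an ultrafilter $\mathcal{U}$ on $I$ such that $R\in{\sf HS}_\exists(A/\mathcal{U})$ with $A/\mathcal{U}\subseteq\prod_\mathcal{U}C_i$.

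For (iii), subdirect irreducibility forces $R$ to be simple: any nonzero two-sided ideal of a regular ring contains a nonzero idempotent, and the chain of idempotent decompositions together with the uniqueness of a minimal nonzero ideal collapses it to all of $R$. The center $F=Z(R)$ of a simple ring is a field, making $R$ an $F$-algebra. For the nonartinian case, I follow Tyukavkin: a nonartinian simple regular ring contains, for each $n$, an orthogonal family $e_1,\dots,e_n$ of nonzero Murray--von Neumann equivalent idempotents with sum $e$; the corner $eRe$ then contains and surjects onto $F^{n\times n}$, placing $F^{n\times n}\in{\sf HS}_\exists(R)$. The reverse inclusion ${\sf V}_\exists(R)\supseteq{\sf V}_\exists\{F^{n\times n}\mid n<\omega\}$ is immediate, and the forward direction comes from embedding $R$ as a regular subring of a product of $F^{n\times n}$'s obtained from a compatible chain of idempotent decompositions.

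For (iv) and (v), I first observe ${\sf VT}(\mathcal{C})\subseteq{\sf TV}_\exists(\mathcal{C})$: the latter is closed under ${\sf H}$, ${\sf S}$, ${\sf P}$ in the language of rings-with-inversion, since each operation on a ring-with-inversion $(R,{}')$ keeps the underlying ring regular and in ${\sf V}_\exists(\mathcal{C})$. For the converse (which is equivalent to (iv)), suppose an identity $\varphi$ valid in ${\sf T}(\mathcal{C})$ fails in some $(R,{}')\in{\sf TV}_\exists(\mathcal{C})$ at $(r_1,\dots,r_k)$. The subalgebra $(R_0,{}')$ they generate is finitely generated and regular (closed under an inner inversion satisfying $xx'x=x$), so $R_0\in{\sf V}_\exists(\mathcal{C})$; apply (ii) to a subdirectly irreducible quotient of $(R_0,{}')$ in which $\varphi$ still fails (its congruences being exactly the ring congruences stable under ${}'$) and lift the restricted inversion to an ultrapower representation by coherently choosing inner inverses at each coordinate for the finitely many elements witnessing the failure, yielding a member of ${\sf T}(\mathcal{C})$ where $\varphi$ fails, a contradiction. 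The main obstacle, both here and in ${\sf S}_\exists{\sf H}\subseteq{\sf HS}_\exists$ for (i), is precisely this coherent lifting: $xx'x=x$ must hold on the nose, not merely modulo a congruence, forcing one to control finitely many elements and their inner inverses simultaneously through the ambient product.
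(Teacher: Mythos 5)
The paper does not prove this Fact at all: it simply cites Herrmann--Semenova \cite{hs} (Prop.~7, Prop.~10(i), Thm.~16), so any self-contained argument is necessarily a different route. Unfortunately, your reconstruction contains genuine errors, concentrated in part (iii). First, a subdirectly irreducible regular ring need \emph{not} be simple: ${\sf End}(V_F)$ for $V$ of countably infinite dimension has ideal chain $0<I_{\mathrm{fin}}<R$, hence a unique minimal nonzero ideal but a proper one; this very ring is used in the paper as a subdirectly irreducible nonartinian member. Your derivation ``subdirect irreducibility $\Rightarrow$ simple $\Rightarrow$ center is a field'' therefore starts from a false premise (the conclusion that $Z(R)$ is a field is still true, but the correct argument is that $Z(R)$ is a commutative regular ring with no idempotents other than $0,1$ --- a nontrivial central idempotent would split $R$ into two nonzero ideals with zero intersection). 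Second, and more seriously, you claim the inclusion ${\sf V}_\exists(R)\subseteq{\sf V}_\exists\{F^{n\times n}\}$ ``comes from embedding $R$ as a regular subring of a product of $F^{n\times n}$'s.'' That is impossible in the nonartinian case: a product of matrix rings over a field is unit-regular, hence directly finite, and direct finiteness passes to arbitrary subrings, whereas a nonartinian subdirectly irreducible regular ring such as ${\sf End}(V_F)$ is not directly finite. The operator ${\sf H}$ is indispensable here; producing $R$ as a \emph{homomorphic image} of a regular subring of such a product is precisely the hard content of Tyukavkin's construction and of \cite[Thm.~16]{hs}, and your sketch supplies no substitute for it.

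There are also smaller gaps. In (i), your proof of ${\sf S}_\exists{\sf H}\subseteq{\sf HS}_\exists$ adjoins inner inverses taken ``in $A$'' at each stage, so the image of the resulting subring $A_0$ in $A/I$ may strictly exceed $B$; as written you only get $B\in{\sf S}_\exists{\sf H}{\sf S}_\exists(A)$, which is circular. The standard repair is to take $A_0=\pi^{-1}(B)$ and invoke the fact that a ring with a regular ideal $I$ and regular quotient $B$ is regular (every ideal of a regular ring being regular as a non-unital ring), or equivalently to lift inner inverses modulo $I$. In (iv)--(v), the inclusion ${\sf VT}(\mathcal{C})\subseteq{\sf TV}_\exists(\mathcal{C})$ is fine, but the converse is where the work lies, and your sketch glosses over the central difficulty: a quasi-inversion on a product (or ultraproduct) need not act coordinatewise, so one cannot simply ``lift the restricted inversion to an ultrapower representation'' without an argument that the finitely many required equations $xx'x=x$ can be realized by genuine coordinatewise expansions of the factors. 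Your (ii) is the one part whose outline is sound, since the ideal lattice of a regular ring is distributive and the J\'onsson filter construction applies verbatim.
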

\begin{proof}
Referring to Herrmann and Semenova \cite{hs},
(i), (iv), and (v) follow from 
 \cite[Prop. 10 (i)]{hs}.
(ii) is \cite[Prop. 7]{hs}.
(iii) follows from  \cite[Thm. 16]{hs} and its proof.
\end{proof}

Define   $s_n(x)= t_n(x,x^+)$. 
In the following, the equivalence of (5) and (9) is   due, in essence, 
to O'Meara and Raphael~\cite[Thm.2.15]{meara}.

\begin{thm}\lab{23}
For an existence variety $\mathcal{V}$ of regular rings the following are
equivalent (where the notion of ``term'' and the terms $x^+$ 
and $t_n(x)$ are as in Thm.~\ref{mainr}) 
\begin{enumerate}
\item All members of $\mathcal{V}$ are perspective.
\item All members of $\mathcal{V}$ are unit-regular.
\item All subdirectly irreducible  members of $\mathcal{V}$ are 
directly finite.
\item All subdirectly irreducible  members of $\mathcal{V}$ are 
artinian.
\item 
There is $d<\omega$ such that all artinian subdirectly irreducible  members of $\mathcal{V}$ are of length $\leq d$.
\item 
There are $d<\omega$ and a class  $\mathcal{C}$ of artinian regular rings
of length $\leq d$ such that $\mathcal{V}={\sf V}_\exists(\mathcal{C})$.
\item  There is $n<\omega$ such that $s_{n+1}(x)s_n(x)=s_n(x)$ 
is valid in ${\sf T}(\mathcal{V})$. 
\item There is $m<\omega$ such that
the identities $(x^{m+1})(x^{m+1})^+x^m=x^m$ and 
$x^m(x^{m+1})^+x^{m+1}=x^m$  are valid in ${\sf T}(\mathcal{V})$.   
\item There is a  term $u(x)$  
yielding unit inner inverses, uniformly in $\mathcal{V}$; that is,
$u(a)$ is a unit inner inverse for any $R\in \mathcal{V}$ and $a \in R$. 
\end{enumerate}
Actually, given $d\geq 2$ in (5) one can choose $n=d-2$
in (7)  and $m=d$ in (8).
\end{thm}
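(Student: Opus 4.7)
My strategy is to prove the equivalences via the cycle
\[ (1) \Leftrightarrow (2) \Rightarrow (3) \Rightarrow (4) \Rightarrow (5) \Rightarrow (6) \Rightarrow (7) \Rightarrow (9) \Rightarrow (2),\]
closing (8) into the cycle through $(6) \Rightarrow (8) \Rightarrow (2)$, tracking parameters so that $n = d-2$ emerges in (7) and $m = d$ in (8). The immediate links: $(1) \Leftrightarrow (2)$ is Handelman's Theorem~\ref{hand} applied to each member of $\mathcal{V}$; $(2) \Rightarrow (3)$ because unit-regular rings are directly finite; $(9) \Rightarrow (2)$ is immediate from the definition of a uniform unit-inner-inverse term; and $(8) \Rightarrow (2)$ reads (8) as uniform strong $\pi$-regularity, whereupon Theorem~\ref{good3} of Goodearl--Menal yields unit-regularity.

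For the structural chain $(3) \Rightarrow (4) \Rightarrow (5)$ I would run a common ultraproduct-and-quotient argument. For $(3) \Rightarrow (4)$ by contraposition: if $R \in \mathcal{V}$ were a nonartinian SI, Fact~\ref{hs}(iii) would give $F^{k\times k} \in {\sf H}{\sf S}_\exists(R) \subseteq \mathcal{V}$ for every $k$. Form $R^* = \prod_k F^{k\times k}/\mathcal{U}$ over a nonprincipal ultrafilter; $R^* \in \mathcal{V}$ is regular and by the ultraproduct construction admits $m$ mutually orthogonal nonzero idempotents for every $m$, so $R^*$ is not directly finite. Pick $a, b \in R^*$ with $ab = 1 \neq ba$; then $1 - ba \neq 0$, and since the Jacobson radical of a regular ring vanishes, some maximal two-sided ideal $M$ of $R^*$ omits $1 - ba$, so the quotient $R^*/M$ is a simple (hence SI) regular member of $\mathcal{V}$ satisfying $\bar a \bar b = 1 \neq \bar b \bar a$, contradicting (3). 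Exactly the same maneuver yields $(4) \Rightarrow (5)$: unbounded length of artinian SIs in $\mathcal{V}$ produces a non-directly-finite $R^* \in \mathcal{V}$, then an SI quotient in $\mathcal{V}$ that is not directly finite and so not artinian (every artinian regular ring being unit-regular, hence directly finite), contradicting (4). For $(5) \Rightarrow (6)$: the SI artinian regular rings are exactly the simple matrix rings $F^{k\times k}$; by (5) those in $\mathcal{V}$ satisfy $k \leq d$, and taking $\mathcal{C}$ to be this class, Herrmann--Semenova~\cite[Thm.~16]{hs} gives $\mathcal{V} = {\sf V}_\exists(\mathcal{C})$.

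For the term-theoretic links, $(6) \Rightarrow (7)$ is a direct appeal to Theorem~\ref{mainr}: with $n = d-2$, the stabilization $t_{n+1}(a,b)\,t_n(a,b) = t_n(a,b)$ holds in every $R \in \mathcal{C}$ for all mutually reflexive $a, b$. Substituting $b = x^+$ (reflexive by the direct computation $x^+ x x^+ = x^+$ using $x^+ = x' x x'$) yields the identity $s_{n+1}(x)\,s_n(x) = s_n(x)$ of $\mathcal{R}$ in ${\sf T}(\mathcal{C})$, which then propagates to ${\sf T}({\sf V}_\exists(\mathcal{C})) = {\sf T}(\mathcal{V})$ via Fact~\ref{hs}(iv). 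For $(7) \Rightarrow (9)$ define $u(x) := u_n(x, x^+)$, which supplies unit inner inverses uniformly in $\mathcal{V}$ by Theorem~\ref{mainr}(ii). For $(6) \Rightarrow (8)$ with $m = d$: in any $R \in \mathcal{C}$ of length $\leq d$ both chains $xR \supseteq x^2 R \supseteq \cdots$ and $Rx \supseteq Rx^2 \supseteq \cdots$ stabilize by the $d$-th step (rank drops strictly until stabilization, and right length equals left length in an artinian regular ring), so $x^d R = x^{d+1} R$ and $Rx^d = Rx^{d+1}$; choosing any reflexive inner inverse of $x^{d+1}$ rewrites these equalities as the two identities of (8), which extend to ${\sf T}(\mathcal{V})$ by Fact~\ref{hs}(iv).

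The main obstacle is the ultraproduct-and-quotient argument shared by $(3) \Rightarrow (4)$ and $(4) \Rightarrow (5)$: the delicate step is extracting, from a non-directly-finite $R^*$, an SI quotient in $\mathcal{V}$ that still witnesses non-direct-finiteness, which relies on the vanishing of the Jacobson radical of the regular $R^*$ and on selecting the maximal ideal to omit the specific witness $1 - ba$. A secondary subtlety is matching the numerical parameters across (5), (7), and (8), which I handle by unwinding the proof of Theorem~\ref{mainr}.
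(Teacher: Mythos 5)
There is a genuine gap in your argument for $(3)\Rightarrow(4)$ and $(4)\Rightarrow(5)$. Direct finiteness is expressed by the first-order sentence $\forall a\forall b\,(ab=1\rightarrow ba=1)$, so by {\L}o{\'s}'s theorem the ultraproduct $R^*=\prod_k F^{k\times k}/\mathcal{U}$ of the (artinian, hence directly finite) matrix rings is itself directly finite. The inference ``$R^*$ admits $m$ mutually orthogonal nonzero idempotents for every $m$, so $R^*$ is not directly finite'' is false -- an infinite product of fields already has infinitely many orthogonal idempotents and is commutative. Consequently there are no $a,b\in R^*$ with $ab=1\neq ba$, and the subsequent quotient construction has nothing to act on. (The auxiliary step is also off: vanishing of the Jacobson radical concerns primitive ideals, not maximal two-sided ideals, and the intersection of all maximal two-sided ideals of a regular ring can be nonzero -- for ${\sf End}(V)$ with $V$ of countable dimension it is the ideal of finite-rank maps; the correct device would be an ideal maximal with respect to omitting $1-ba$, but this is moot here.) The same flaw sinks your $(4)\Rightarrow(5)$, which again asserts that an ultraproduct of artinian simple rings fails direct finiteness. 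The paper's route avoids this entirely: having $F^{k\times k}\in\mathcal{V}$ for all $k$ (for a single field $F$), it applies Fact~\ref{hs}(iii) once more to conclude ${\sf End}(W_F)\in{\sf V}_\exists\{F^{k\times k}\mid k<\omega\}\subseteq\mathcal{V}$ for infinite-dimensional $W$; this ring genuinely is subdirectly irreducible, nonartinian and not directly finite, contradicting (3) and (4) at once. The ultraproduct in the paper's proof of ``(3),(4) $\Rightarrow$ (5)'' serves only to replace the varying division rings $D_n$ by a single $D$ (hence a single field $F$), not to destroy direct finiteness. In your contraposition of $(3)\Rightarrow(4)$ the repair is immediate, since you already hold $F^{k\times k}\in\mathcal{V}$ for all $k$; for the unbounded-length case you additionally need the paper's corner/ultraproduct normalization before citing Fact~\ref{hs}(iii).

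The rest of your plan is sound and essentially the paper's: $(1)\Leftrightarrow(2)$ via Handelman, $(2)\Rightarrow(3)$, $(5)\Rightarrow(6)$, $(6)\Rightarrow(7)$ and $(6)\Rightarrow(8)$ with the correct parameters $n=d-2$ and $m=d$, $(8)\Rightarrow(2)$ via Goodearl--Menal, and $(9)\Rightarrow(2)$. Your $(7)\Rightarrow(9)$ via $u(x):=u_n(x,x^+)$ is a legitimate and slightly more direct route than the paper's, which instead derives (9) from (2) through the free one-generated ${\sf T}(\mathcal{V})$-algebra.
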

\begin{proof} 
(7) implies (1) by (i) of Theorems~\ref{mainr}.
(8) implies (1), too, in view of Theorem~\ref{good3}.
 (1) is equivalent to (2)
by Theorem~\ref{hand}, and (2) implies (3).

Each of (3) and (4) implies (5):
 Indeed, assume that
there are artinian subdirectly irreducibles  $R_n\in \mathcal{V}$
with no bound on length. Renumbering and  passing to corners and isomorphic copies,  we may assume that $R_{n}\cong D_n^{n\times n}$ for some division ring $D_n$.
Thus, for fixed $m$ and all $n\geq m$, the ring
  $R_n$ contains a subring $R_{mn} \cong D_n^{m\times m}$.
Choose $R_{mn}=0$ for $n<m$. Thus, in particular $R_{mn}\in \mathcal{V}$
for all $m,n$.
Recall that, for fixed $m$, the class of all  rings
isomorphic to $D^{m\times m}$ for some division ring $d$
can be finitely first-order axiomatized if
one adds $m^2$ constants for a system of matrix units.
Thus, choosing  a non-principal ultrafilter $\mathcal{F}$  
on $\mathbb{N}$ one has for any fixed $m$
 the ultraproduct $(\prod_{n\in \mathbb{N}}R_{mn})/\mathcal{F}$
isomorphic to $D^{m\times m}$ where 
$D=(\prod_{n\in \mathbb{N}} D_n)/\mathcal{F}.$
It follows $D^{m\times m} \in \mathcal{V}$
and thus $F^{m\times m} \in \mathcal{V}$
for all $m$ where $F$ is the center of $D$.
 Now, consider 
  any infinite-dimensional $F$-vector space $W$ and
 ${\sf End}(W_F)$; the latter is subdirectly irreducible, nonartinian,
and not directly finite. By Fact~\ref{hs}(iii) one has 
${\sf End}(W_F)\in \mathcal{V}$ contradicting both (3) and (4).

(5) implies  (4):
Assume there is subdirectly irreducible
$R \in \mathcal{V}$ which is not artinian. By Fact~\ref{hs}(iii),
$R$ is an $F$-algebra for some field $F$ and 
 $\mathcal{V} \supseteq {\sf V}_\exists(R) =  {\sf V}_\exists\{F^{n\times n}\mid n <\omega\}$ so that the (subdirectly irreducible)
 $F^{n\times n} \in \mathcal{V}$ 
for all $n<\omega$, contradicting (5).

(5) implies (6):
Since  (5) implies (4), in view of Fact~\ref{hs}(ii)
 it  follows that $\mathcal{V}$ is generated by members
 of length $\leq d$.

 (6) implies (7) and (8):
Let $\mathcal{R}_d$ consist of all artinian regular rings 
which are of length at most $d$.
Thus,
$\mathcal{V}\subseteq {\sf V}_\exists(\mathcal{R}_d)$.
 Now, consider subdirectly 
irreducible $R \in \mc{V}$.
By Fact~\ref{hs}(ii) one has 
    $R \in {\sf HS}_\exists{\sf P}_u(\mathcal{R}_d)$.
Since the property of having length $\leq d$
 can be expressed, easily,  by a first-order formula
(in various ways), we have  ${\sf P}_u(\mathcal{R}_d) \subseteq \mathcal{R}_d$
while  ${\sf HS}_\exists(\mathcal{R}_d) \subseteq \mathcal{R}_d$ is obvious.
This implies that 
$R \in \mathcal{R}_d$ whence $\mathcal{V} \subseteq {\sf V}_\exists(\mathcal{R}_d)$ by Fact~\ref{hs}(ii). 
 By (ii) of Theorem~\ref{main}
the identities $s_{n+1}(x)s_n(x)=s_n(x)$,
$(x^{m+1})(x^{m+1})^+x^m=x^m$, and
$x^mx^m(x^{m+1})^+x^{m+1}$  (where $n=d-2$ and $m=d$)
are valid in  ${\sf T}(\mathcal{R}_d)$ and so in ${\sf T}(\mathcal{V})$
by Fact~\ref{hs}(iv). 

(9) implies (2), trivially. Conversely, by (v) of   Fact~\ref{hs}, 
the free ${\sf T}(\mc{V})$-algebra  $A$
on a single generator $a$ is unit-regular; that is, there  
is a  term $u(x)$ that that $u(a)$ is a unit inner inverse of $a$
in $A$. Thus $u(b)$ is a unit inner inverse of
$R$ for all $R\in \mc{V}$ and $b \in R$
\end{proof}
The following has been   suggested   by the referee.
\begin{cor}\lab{24} Considering classes $\mc{V}$ closed under
${\sf H}$, ${\sf S}_\exists$, and ${\sf P}_u$ 
the equivalences of
Theorem~\ref{23} remain valid 
if  ${\sf P}$
is replaced by ${\sf P}_u$,
everywhere.
\end{cor}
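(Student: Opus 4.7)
\medskip
\noindent\textbf{Proof proposal.}
The plan is to reduce Corollary~\ref{24} to Theorem~\ref{23} by passing from $\mathcal{V}$ to the existence variety it generates, $\widehat{\mathcal{V}}:={\sf V}_\exists(\mathcal{V})={\sf H}{\sf S}_\exists{\sf P}(\mathcal{V})$, and transferring each of the nine conditions between $\mathcal{V}$ and $\widehat{\mathcal{V}}$. The decisive input is a subdirectly-irreducible transfer lemma: every subdirectly irreducible member of $\widehat{\mathcal{V}}$ already lies in $\mathcal{V}$. Indeed, by Fact~\ref{hs}(ii) such a ring belongs to ${\sf H}{\sf S}_\exists{\sf P}_u(\mathcal{V})$, and this operator leaves $\mathcal{V}$ invariant by the standing hypothesis. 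Consequently $\mathcal{V}$ and $\widehat{\mathcal{V}}$ have literally the same subdirectly irreducible members.

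With this lemma in hand, conditions (3), (4), (5) -- which are purely statements about subdirectly irreducible members -- transfer directly between $\mathcal{V}$ and $\widehat{\mathcal{V}}$. Conditions (7)--(9) are equational in the language of $\mathcal{R}$: by Fact~\ref{hs}(iv)--(v) the classes ${\sf T}(\mathcal{V})$ and ${\sf T}(\widehat{\mathcal{V}})$ satisfy exactly the same identities, so the validity of the identities in (7) and (8), and the existence of the unit-inner-inverse term in (9), is preserved. For (1) and (2), the inclusion $\mathcal{V}\subseteq\widehat{\mathcal{V}}$ gives the implications from $\widehat{\mathcal{V}}$ to $\mathcal{V}$ for free; conversely, one runs the short loop
\[
(2)\text{ for }\mathcal{V}\;\Rightarrow\;(3)\text{ for }\mathcal{V}\;=\;(3)\text{ for }\widehat{\mathcal{V}}\;\Rightarrow\;(2)\text{ for }\widehat{\mathcal{V}}\;\Rightarrow\;(2)\text{ for }\mathcal{V},
\]
using direct finiteness from unit-regularity, the s.i.-transfer lemma, Theorem~\ref{23} applied to $\widehat{\mathcal{V}}$, and $\mathcal{V}\subseteq\widehat{\mathcal{V}}$ in turn. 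Theorem~\ref{23} for $\widehat{\mathcal{V}}$ then welds all conditions together.

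The one condition that calls for interpretive care is (6), since $\mathcal{V}$ is not assumed to be an existence variety: the natural reading of ``replace ${\sf P}$ by ${\sf P}_u$'' here is to recast ``$\mathcal{V}={\sf V}_\exists(\mathcal{C})$'' as ``$\widehat{\mathcal{V}}={\sf V}_\exists(\mathcal{C})$''. With this reading (6) becomes literally Theorem~\ref{23}(6) applied to $\widehat{\mathcal{V}}$, and its equivalence with the remaining conditions follows immediately from the s.i.-transfer lemma. I expect this to be the main subtlety: the stronger literal reading $\mathcal{V}={\sf H}{\sf S}_\exists{\sf P}_u(\mathcal{C})$ would be too rigid, since $\mathcal{V}$ may contain rings such as $F^{\mathbb{N}}$ which cannot be recovered from a class of bounded artinian length via ${\sf H}$, ${\sf S}_\exists$, and ${\sf P}_u$ alone, so one should settle on the $\widehat{\mathcal{V}}$-level reading.
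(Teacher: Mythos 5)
Your argument is correct, but it takes a genuinely different route from the paper. The paper re-runs the proof of Theorem~\ref{23} inside the ${\sf P}_u$-closed setting: it adapts Fact~\ref{hs} using ${\sf P}_u{\sf H}(\mc{C})\subseteq{\sf H}{\sf P}_u(\mc{C})$, patches the step ``(3),(4) $\Rightarrow$ (5)'' by observing that ${\sf End}(W_F)$ is already reachable as ${\sf HS}_\exists{\sf P}_u(\{F^{m\times m}\})$, and supplies a separate ultraproduct argument, via O'Meara--Raphael, to connect (9) to (5) (compensating for the fact that the free-algebra proof of (2)~$\Rightarrow$~(9) needs full ${\sf P}$-closure). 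You instead pass to $\widehat{\mathcal{V}}={\sf V}_\exists(\mathcal{V})$ and exploit the transfer lemma that $\mathcal{V}$ and $\widehat{\mathcal{V}}$ have the same subdirectly irreducible members -- an immediate consequence of Fact~\ref{hs}(ii) and the closure hypotheses -- which lets Theorem~\ref{23}, applied verbatim to the honest existence variety $\widehat{\mathcal{V}}$, do all the work. This is cleaner: it avoids re-verifying the ultraproduct constructions and makes the O'Meara--Raphael input unnecessary. Two small caveats. First, your assertion that condition (9) is ``equational'' is not quite accurate: ``$u(a)$ is a unit'' is existential and there is no a priori term for the inverse, so (9) for $\mathcal{V}$ does not transfer to $\widehat{\mathcal{V}}$ by identity-preservation alone; however, your own loop through (2) and (3) (or the observation that (7) yields (9) directly via the term $u_n$ of Theorem~\ref{mainr}(ii)) closes this gap, and the reverse direction is mere restriction, so nothing is lost. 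Second, your discussion of how to read (6) is a legitimate point that the paper passes over in silence; your resolution at the level of $\widehat{\mathcal{V}}$ is the reading under which the equivalence actually holds, since a literal ${\sf HS}_\exists{\sf P}_u(\mathcal{C})$ cannot produce members of unbounded length such as $F^{\mathbb{N}}$.
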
 
\begin{proof}
For the equivalence of (1), $\ldots$. (8) it suffices to adapt
(i)--(iv) of Fact~\ref{hs} and to observe that
 ${\sf P}_u{\sf H}(\mc{C})
\subseteq {\sf H}{\sf P}_u(\mc{C})$ 
which is well known due to the fact that an ultraproduct
of surjective homomorphisms amounts to a surjective homomorphism.
In order to complete the proof that both (3) and (4) imply (5)
observe that
 ${\sf End}(W_F)\in {\sf HS}_\exists 
{\sf P}_u(\{F^{m\times m}\mid m <\omega\})$  by (ii) of Fact~\ref{hs}.
and that $F^{m \times m} \in {\sf S}_\exists{\sf P}_u(\{R_n\mid n<\omega\})$
whence ${\sf End}(W_F) \in \mc{V}$. Also (9) implies (2), trivially.

To prove that  (9) implies (5) we proceed by contradiction. 
Assume that for each n there is  $R_n\in \mc{V}$
which is  subdirectly irreducible  artinian
of length $d_n \geq n$. In particular, there is
a  division ring $F_n$ such that 
 $R_n\cong M_{d_n}(F_n)$. Let $R$ a non-trivial ultraproduct  of the $R_n$.
$R$
contains, for each $n$, a $d_n\times d_n$ system of matrix units 
 whence a subring isomorphic to $M_{d_n}(F)$ where $F$ is the prime subfield
of the corresponding ultraproduct of the $F_n$.
According to O'Meara and Raphael~\cite[Thm. 2.15]{meara}
there is no $u$ as required in (9).

\end{proof}

\begin{cor}
The analogues of the  equivalences of Theorem~\ref{23}
are valid for varieties $\mc{V}$ of regular rings 
with inner inverse as basic operations, i.e. subvarieties of
$\mc{R}$ as defined in Subsection~\ref{var}; that is, with 
 ${\sf HSP}$ in place of
${\sf V}_\exists$ and omitting operator ${\sf T}$.
\end{cor}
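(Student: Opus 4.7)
The plan is to rerun the proof of Theorem~\ref{23} with ${\sf HSP}$ replacing ${\sf V}_\exists$ and the ${\sf T}$ operator omitted. Pointwise implications transfer immediately: $(7)\Rightarrow(1)$ via Theorem~\ref{mainr}(i) applied to $(a,a^+)$ in any $R\in\mc{V}$; $(8)\Rightarrow(1)$ via Theorem~\ref{good3}; $(1)\Leftrightarrow(2)$ via Theorem~\ref{hand}; $(2)\Rightarrow(3)$ since unit-regular implies directly finite; and $(9)\Leftrightarrow(2)$ follows by considering the free $\mc{V}$-algebra on one generator in place of the construction in Theorem~\ref{23} based on Fact~\ref{hs}(v). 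For $(6)\Rightarrow(7),(8)$, writing $\mc{V}={\sf HSP}(\mc{C})$ with $\mc{C}$ consisting of artinian algebras of length $\leq d$, Theorem~\ref{mainr} gives the identity $s_{n+1}(x)s_n(x)=s_n(x)$ for $n=d-2$ and the Goodearl--Menal-type identities for $m=d$ on $\mc{C}$, and identities are preserved by ${\sf HSP}$.

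For the structural implications I would set $\mc{V}^r=\{R\mid (R,')\in\mc{V}\}$ and $\mc{W}={\sf V}_\exists(\mc{V}^r)$. Because the inner inverse is a function, the $\mc{R}$-congruences of $(R,')$ coincide with the ring congruences of $R$, so the subdirectly irreducible members of $\mc{V}$ have subdirectly irreducible ring reducts in $\mc{W}$. Once $(4)$ is in hand, $(6)$ follows from Birkhoff's subdirect decomposition applied inside the variety $\mc{V}$. The implications $(3)\vee(4)\Rightarrow(5)$ and $(5)\Rightarrow(4)$ are the substantive content: a hypothetical sequence of artinian subdirectly irreducible $R_n\in\mc{V}$ of unbounded length yields, via ultraproducts inside $\mc{V}$ (closed under ${\sf P}_u$ as a variety), matrix algebras $F^{m\times m}$ in the ring reducts for every $m$; Fact~\ref{hs}(iii) combined with Fact~\ref{hs}(v) then supplies an infinite-dimensional endomorphism ring in ${\sf T}(\mc{W})={\sf VT}(\mc{V}^r)$, contradicting $(3)$ or $(4)$ after being traced back to a member of $\mc{V}$.

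The main obstacle is precisely this last descent: the endomorphism rings and matrix algebras produced by the existence-variety machinery must be realized as $\mc{R}$-subalgebras of members of $\mc{V}$, not merely of the possibly larger variety ${\sf VT}(\mc{V}^r)$. I expect to resolve this by building the required subalgebras directly inside an ultraproduct of the $R_n$ in $\mc{V}$ using the terms $x^+$ and $\gamma(x)=xx^+$, so that the matrix units $e_{ij}$, together with their closure under $'$, form an $\mc{R}$-subalgebra of the ultraproduct isomorphic to $F^{m\times m}$ equipped with the inner inverse inherited from the ambient $R_n$. Since ${\sf HSP}$-closure keeps this inside $\mc{V}$, the contradiction with $(3)$ or $(4)$ is preserved, and the cycle of equivalences closes.
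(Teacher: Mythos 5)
Your division of labor is sensible and in fact more explicit than the paper's own two-line proof, which asserts that only $(5)\Rightarrow(4)$ needs reconsideration and that it "follows as in the proof of Corollary~\ref{24}". The genuine gap lies in your proposed resolution of what you correctly identify as the main obstacle. You want the matrix units $e_{ij}$ inside an ultraproduct $R$ of the $R_n$, "together with their closure under $'$", to form an $\mc{R}$-subalgebra isomorphic to $F^{m\times m}$ with the inherited inner inverse. This fails: the operation $'$ of $R$ assigns to each element \emph{some} quasi-inverse, and there is no reason for $e_{ij}'$ (let alone the iterates $((e_{ij}')')',\dots$) to lie in the $F$-span of the matrix units, so the $\mc{R}$-subalgebra they generate can be huge and need not be artinian or a matrix ring at all. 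The route the paper takes (imported from Corollary~\ref{24}) sidesteps this entirely: one never needs $F^{m\times m}$ to be a \emph{member} of $\mc{V}$. The system of matrix units is used only as a ring-theoretic configuration sitting \emph{inside} a member of $\mc{V}$ --- namely the ultraproduct itself, which belongs to $\mc{V}$ since every variety is closed under ${\sf P}_u$ --- and the contradiction is derived there, against a term identity such as $s_{n+1}(x)s_n(x)=s_n(x)$ or against the existence of a uniform unit inner inverse via O'Meara--Raphael. Your argument should be restructured along those lines rather than by trying to manufacture small $\mc{R}$-subalgebras.

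A secondary but real error: the $\mc{R}$-congruences of $(R,{}')$ do \emph{not} coincide with the ring congruences of $R$; you have the containment backwards. An ideal $I$ induces a congruence of the expanded algebra only if $a-b\in I$ forces $a'-b'\in I$, and since quasi-inverses are far from unique this can fail (already in $F\times F$ with $I=0\times F$, choosing $(0,0)'=(1,0)$ and $(0,1)'=(0,1)$, which is perfectly legal in $\mc{R}$). Hence a subdirectly irreducible member of $\mc{V}$ need not have a subdirectly irreducible ring reduct, and your reduction of conditions (3)--(5) to the ring-theoretic statement of Fact~\ref{hs}(iii) needs more care than the proposal provides. (A smaller slip: Birkhoff's subdirect decomposition gets you from (4) to generation by artinian subdirect irreducibles, but the uniform length bound in (6) requires (5), not (4) alone.)
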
 
\begin{proof}
The only step in the proof of Thm.\ref{23}  to be reconsidered is $(5) \Rightarrow (4)$;
this follows as in the proof of Cor.\ref{24}.
\end{proof}

\noindent
{\bf Remark}.
At present,
only some of the implications could be extended 
  to  $*$-regular rings. This is related to the following questions, the 
first being due to Handelman. Are all $*$-regular rings directly finite or even
unit-regular? Is the  
variety   of all  $*$-regular rings (with  pseudo-inversion) generated by artinians? 
If a variety is generated by artinians does it have   all members directly finite?
The reasoning given in \cite{arch,arch2} for a positive answer to the latter
is incomplete and, most likely, cannot be completed.
Thus, direct finiteness of semiartinian $*$-regular rings 
(claimed in \cite{arch3})
remains open, too.

\section{Declarations}
\subsection{Funding} None.
\subsection{Data availability} Not applicable.
\subsection{Ethical standards} The author declares that there
are no conflicts of interest.

\end{document}